\newtheorem{thm}{Theorem} [section]
\newtheorem{cor}[thm]{Corollary}
\newtheorem{lem}[thm]{Lemma}
\theoremstyle{definition}
\theoremstyle{theorem}
\newtheorem{rem}[thm]{Remark}
\renewcommand{\>}{\rangle}
\newcommand{\<}{\langle}
\renewcommand{\epsilon}{\varepsilon}
\newcommand{\Con}{{\rm Con}^\omega }
\newcommand{\CG}{\Con (G, d)}
\newcommand{\e }{\varepsilon }
\newcommand{\lab }{{\bf Lab}}
\date{}
\begin{document}

\title{Finitely presented groups with infinitely many non-homeomorphic asymptotic cones}

\author{D. Osin, A. Ould Houcine}
\address[D. Osin]{Department of Mathematics, Vanderbilt University, Nashville TN 37240, USA.}
\email{denis.osin@gmail.com}
\address[A. Ould Houcine]{Universit\'e de Mons, Institut de Math\'ematique, B\^atiment Le Pentagone, avenue du Champ de Mars 6, B-7000 Mons, Belgique.
Universit\'e de Lyon; Universit\'e Lyon 1; INSA de Lyon, F-69621; Ecole Centrale de
Lyon; CNRS, UMR5208, Institut Camille Jordan, 43 Blvd du 11 Novembre 1918,
F-69622 Villeurbanne-Cedex, France.}
\email{ould@math.univ-lyon1.fr}

\thanks{The research was supported by the NSF grant DMS-1006345. The first author was also supported by the RFBR grant 11-01-00945}

\maketitle

\begin{abstract} We construct a finitely presented group with infinitely many non-homeomorphic asymptotic cones.   We also show that the existence of cut points in asymptotic cones of finitely presented groups does, in general, depend on the choice of scaling constants and ultrafilters.
\end{abstract}

\section{Introduction}

Asymptotic cones where first introduced by Gromov \cite{Gro-asy} to prove virtual nilpotence of groups of polynomial growth.  Van den Dries and Wilkie \cite{Van-Wilkie} gave a definition, which applies to arbitrary metric spaces and uses non-standard analysis, via ultrafilters.  Roughly speaking, the asymptotic cone of a metric space $(S,dist)$ corresponding to a non-principal ultrafilter $\omega$, a sequence of observation points $e=(e_n)_{n \in \Bbb N}\subseteq S$, and a sequence of scaling constants $d=(d_n)_{n \in \Bbb N}$ diverging to $\infty$, is the ultralimit of the pointed metric spaces $(S, dist/d_n, e_n)$.

If  $G$ is a group endowed with a word metric with respect to a finite generating set, then asymptotic cones of $G$ are independent on the choice of the observation sequence $e$. Moreover, up to bi-Lipschitz equivalence asymptotic cones of $G$ are independent of the choice of a particular finite generating set of $G$. In what follows, we denote the asymptotic cone of a finitely generated group $G$ corresponding to a non-principal ultrafilter $\omega$ and a sequence of scaling constants $d$ by $\CG$.

Many algebraic, geometric, and algorithmic properties of finitely generated groups are encoded in topology of their asymptotic cones. For instance, a group $G$ is hyperbolic if and only if all asymptotic cones of $G$ are real trees \cite{Gro}. All asymptotic cones of $G$ are proper if and only if $G$ is virtually nilpotent \cite{Van-Wilkie,Dru}. If all asymptotic cones of $G$ are simply connected, then $G$ is finitely presented and satisfies a polynomial isoperimetric inequality; in particular, the word problem in $G$ is in NP \cite{Dru,Gro-que}.

It is natural to ask whether various topological invariants of asymptotic cones of a given group $G$ depend on the choice of the scaling sequence and the ultrafilter. Questions of this kind go back to \cite{Gro-que},  where Gromow asked if a finitely generated (or finitely presented) group can have two non-homeomorphic asymptotic cones. First examples of finitely generated groups with this property where constructed by Thomas and Velicovic in \cite{Thom-Veli}. Later on Drutu  and  Sapir \cite{DS}  gave an example of a finitely generated group with continuously many non-homeomorphic asymptotic cones. In both constructions the desired groups are limits of small cancellation groups. In particular, groups constructed by Thomas-Velicovich and Drutu-Sapir are not finitely presented.

Some progress on Gromov's question for finitely presented groups was achieved by Kramer, Shelah, Tent and Thomas  \cite{K-S-T-T}. They proved that some natural finitely presented groups (e.g., uniform lattices in $SL_3(\mathbb R)$) have a unique asymptotic cone up to homeomorphism if the Continuum Hypothesis holds, and $2^{2^{\aleph_0}}$ non-homeomorphic asymptotic cones if the Continuum Hypothesis fails.  The first and the only example of a finitely presented group with different asymptotic cones independent of the continuum Hypothesis was constructed by Olshanskii and Sapir \cite{Ol-sap}. More precisely, they constructed a finitely presented group having two asymptotic cones, one of which is simply connected while the other is not. However the question of whether a finitely presented group can have infinitely many non-homeomorphic asymptotic cones independently of the Continuum Hypothesis was open until now.

To distinguish between infinitely many asymptotic cones one would need a topological invariant which can take infinitely many values. Most traditional algebraic invariants (e.g., the fundamental group, which is used in \cite{DS}) are, in general, very hard to compute for asymptotic cones of groups. In this paper we use \emph{ connectivity degree} of a path connected metric space $S$, denoted $c(S)$, which is the minimal cardinality of a finite subset $D\subset S$ such that $S\setminus D$ is path disconnected. If $S$ can not be disconnected by removing a finite subset, we set $c(S)=\infty $.   Our main results is the following. We denote by $\mathcal P$ the set of all prime numbers.

\begin{thm}\label{main} There exists a finitely presented group $\Gamma $  satisfying the following condition. For every $p\in \mathcal P\cup \{ 1\}$ there exists a scaling sequence $d=(d_n)$ such that for every non-principal ultrafilter $\omega $, we have $c(\Con (\Gamma, d))=p$. \end{thm}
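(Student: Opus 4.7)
The plan is to extend the strategy of Olshanskii--Sapir \cite{Ol-sap} from a single ``anomalous'' scale to a controlled infinite family of scales, one for each prime. I would construct $\Gamma$ as a finitely presented quotient of a hyperbolic (or lacunary-hyperbolic) group which contains, at a prescribed sequence of scales $\ell_1<\ell_2<\cdots$, finite ``gadgets'' $X_n$ inside its Cayley graph with precisely controlled cut structure. For each prime $p$, the gadgets $X_n$ associated to the scales $\ell_n$ in a designated subset $S_p\subseteq\mathbb N$ would be designed so that, after rescaling by $\ell_n$, the image admits a distinguished set of $p$ points whose removal disconnects the picture, while no set of fewer than $p$ points does so -- a metric, robust analogue of the vertex connectivity of a $\theta$-graph with $p+1$ strands. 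Finite presentability is preserved by producing these gadgets at all scales as images of a single relation under powers of a suitable ``inflation'' mechanism (an HNN stable letter or an endomorphism encoded in the presentation), in the spirit of the finite presentation trick of \cite{Ol-sap}.

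For a fixed prime $p$ I would then take $d_n=\ell_{k_n}$ where $(k_n)$ enumerates $S_p$. Independence of the ultrafilter would come essentially for free: because every term of the scaling sequence is already a ``gadget scale of type $p$'', the rescaled Cayley graph $\omega$-converges, along any non-principal $\omega$, to the same model cone $C_p$ built from a pointed ultralimit of the gadgets. The proof that $c(C_p)=p$ would split into two steps:
\begin{enumerate}
\item The ultralimit of the distinguished $p$-sets in $X_{k_n}$ disconnects $C_p$. This uses that outside the gadget the Cayley graph of $\Gamma$ is uniformly ``thin'' at the relevant scales (e.g.\ hyperbolic, or tree-like after rescaling), so at scale $\ell_{k_n}$ no long path can go around the marked $p$ points without passing through them.
\item No set of fewer than $p$ points disconnects $C_p$, by a Menger-style argument: each gadget $X_{k_n}$ carries $p$ internally disjoint families of geodesics joining the two sides of the cut, and any $(p-1)$-point set in $C_p$ is represented by $p-1$ sequences of points that $\omega$-miss at least one such family.
\end{enumerate}
The case $p=1$ (a single cut point) is essentially covered by \cite{Ol-sap}, and the corresponding ``$\theta$-with-one-strand'' gadgets can simply be interleaved with the others in the presentation.

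The main obstacle is the simultaneous achievement of (i) finite presentability of $\Gamma$, (ii) the \emph{lower} bound $c(\Con(\Gamma,d))\ge p$, and (iii) ultrafilter-independence of $c$. Finite presentability forces the infinite family of gadgets to be generated by a single rule, so the geometry of $\Gamma$ at all other scales must be controlled uniformly by the same data; obtaining a sharp lower bound on connectivity in the ultralimit then requires that the $p$ internally disjoint paths inside each gadget be stable under replacement by arbitrary paths in the ambient Cayley graph at intermediate (non-gadget) scales $\ell_{k_n}<r<\ell_{k_{n+1}}$. Ruling out shortcuts coming from these intermediate scales, without destroying finite presentability, is where the technical heart of the argument should lie, and it is the reason a hyperbolic-like ``background'' geometry is essential: it provides the uniform thinness that prevents spurious cuts and spurious detours simultaneously.
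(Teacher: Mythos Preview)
Your proposal is an outline, not a proof: you explicitly identify the ``main obstacle'' (simultaneous finite presentability, the lower bound $c\ge p$, and ultrafilter-independence) without offering a mechanism to overcome it. Two concrete gaps: first, asymptotic cones of groups are homogeneous, so a single embedded $\theta$-gadget does not by itself yield a global connectivity invariant --- you would need the gadget structure to be replicated at every point in a way compatible with the group action, and nothing in your sketch provides this. Second, the reference to \cite{Ol-sap} does not do the work you assign to it: Olshanskii--Sapir distinguish two cones by \emph{simple connectedness}, not by cut-point structure, and their machinery (S-machines, Dehn-function control) gives no obvious handle on $c(\Con(\Gamma,d))$ as a numerical invariant taking prescribed values. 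The ``inflation mechanism'' you invoke for finite presentability is precisely where the difficulty concentrates, and no candidate is given.

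The paper's route is entirely different and avoids direct geometric control of the cone. One first builds a recursively presented group $G$ with infinite central subgroup $N=Z(G)$ (a small-cancellation central extension) engineered so that, for each prime $p_j$, there is a scaling sequence $r^j$ along which $\Con(N,r^j)$ has exactly $p_j$ points, and another sequence along which it has one point. Then $G$ is embedded in a finitely presented $\Gamma$ via a \emph{center-preserving} Higman embedding with at most quadratic distortion (Theorem~\ref{thm-embedding}); this is the technical heart, and it guarantees $Z(\Gamma)=N$ and that $\Gamma/Z(\Gamma)$ is constricted. The connectivity degree is then read off from the formula $c(\Con(\Gamma,d))=|\Con(N,d)|\cdot c(\Con(\Gamma/N,d))$ of \cite[Theorem~5.8]{Ol-Os-S}: the second factor is $1$ because $\Gamma/N$ is constricted, and the first is the desired prime (or $1$). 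Thus the ``$p$ disjoint strands'' you are looking for arise not from embedded gadgets but from the $p$ points of the cone of the center sitting over each cut point of the quotient; finite presentability is purchased by the Higman step, and the quadratic distortion bound is exactly what survives that step and suffices to keep the point-count of $\Con(N,d)$ under control.
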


Since connectivity degree is invariant under homeomorphisms, the next result is an immediate corollary of the theorem.

\begin{cor} There exists a finitely presented group with infinitely many non-homeomorphic asymptotic cones. \end{cor}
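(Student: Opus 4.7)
The plan is to read the corollary as a direct consequence of Theorem~\ref{main}, so the entire job is to package the theorem's output into the statement that infinitely many non-homeomorphic cones exist. Let $\Gamma$ be the finitely presented group furnished by Theorem~\ref{main}. For each $p \in \mathcal P \cup \{1\}$, the theorem supplies a scaling sequence $d^{(p)} = (d_n^{(p)})$ and, together with an arbitrary fixed choice of non-principal ultrafilter $\omega$ (any one will do), an asymptotic cone $X_p := \Con(\Gamma, d^{(p)})$ satisfying $c(X_p) = p$.

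The second step is to invoke the invariance of $c$ under homeomorphisms. This is essentially by definition: a homeomorphism $f\colon X \to Y$ sends any finite subset $D \subset X$ that disconnects $X$ to a finite subset $f(D) \subset Y$ of the same cardinality whose complement $Y \setminus f(D) = f(X \setminus D)$ is path-disconnected, and vice versa, so $c(X) = c(Y)$ whenever $X \cong Y$. Consequently, $X_p \not\cong X_q$ (as topological spaces) whenever $p \neq q$.

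Since $\mathcal P$ is infinite, the family $\{X_p : p \in \mathcal P\}$ is an infinite collection of pairwise non-homeomorphic asymptotic cones of $\Gamma$, proving the corollary. There is really no obstacle at this level: all the work is absorbed into Theorem~\ref{main}, which must produce the cones with the prescribed connectivity degrees; the main difficulty, left for the body of the paper, is the construction of $\Gamma$ and the scaling sequences $d^{(p)}$ realising each prime as the connectivity degree of some cone, not the passage from the theorem to the corollary.
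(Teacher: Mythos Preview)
Your argument is correct and matches the paper's own reasoning: the corollary is deduced immediately from Theorem~\ref{main} using only that the connectivity degree $c(\cdot)$ is a homeomorphism invariant, so cones with distinct values of $c$ cannot be homeomorphic.
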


The same construction also allows us to address the question of whether the existence of cut points in asymptotic cones of finitely presented groups depends on the choice of scaling constants and ultrafilters.  Recall that $s$ is a cut point of a path connected metric space $S$ if $S\setminus{s}$ is path disconnected. Thus $S$ has a cut point iff $c(S)=1$. Existence of cut points in asymptotic cones of a given group $G$ has many purely algebraic consequences (e.g., such a group $G$ does not satisfy any nontrivial law) and can be used to study outer automorphisms and subgroups of $G$   \cite{DS,DMS}. Recall that a finitely generated group is \textit{wide} if all its asymptotic cones are without cut points and is {\it unconstricted} if at least one of its asymptotic cones does not have cut points. Drutu and Sapir \cite{DS} asked if every unconstricted group is wide. In \cite{Ol-Os-S}, Olshanskii,  Osin  and  Sapir constructed first examples of finitely generated unconstricted non-wide  groups. However these groups are not finitely presented.

For finitely presented groups the question is of particular interest since the property of being wide is closely related to the existence of the so-called Morse quasi-geodesics \cite{DMS}, which can be thought of as hyperbolicity of the group along a certain direction. Notice that the ordinary hyperbolicity of a finitely presented group can be recognized by looking at just one asymptotic cone. Indeed in the appendix to \cite{Ol-Os-S}, Kapovich and Kleiner proved that if at least one asymptotic cone of a finitely presented group $G$ is a real tree, then $G$ is hyperbolic. On the other hand this is not true for finitely generated groups \cite{Ol-Os-S}. Thus, a priori, the answers to the Drutu-Sapir question could be different for finitely generated and finitely presented groups. However, Theorem \ref{main} obviously implies the following.

\begin{cor} There exists a finitely presented group $\Gamma $ and sequences $a=(a_n)$, $b=(b_n)$ such that for any non-principal ultrafilter $\omega$, \emph{Con}$^\omega(\Gamma , a)$ has cut points while \emph{Con}$^\omega(\Gamma , b)$ does not. \end{cor}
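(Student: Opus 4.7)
The plan is to derive this corollary directly from Theorem \ref{main} by unpacking the definition of connectivity degree. First I will observe that for any path connected metric space $S$ one has $c(S) \geq 1$ by convention, and $c(S) = 1$ means exactly that some single point $s \in S$ has $S \setminus \{s\}$ path disconnected; equivalently, $s$ is a cut point of $S$. Consequently $S$ has a cut point if and only if $c(S) = 1$, and $S$ has no cut point if and only if $c(S) \geq 2$.

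Next I will apply Theorem \ref{main} twice to the single finitely presented group $\Gamma$ it produces. The value $p = 1 \in \mathcal{P} \cup \{1\}$ yields a scaling sequence $a = (a_n)$ such that $c(\Con(\Gamma, a)) = 1$ for every non-principal ultrafilter $\omega$, so each of these cones has a cut point. Any prime value of $p$, for instance $p = 2$, yields a scaling sequence $b = (b_n)$ such that $c(\Con(\Gamma, b)) = 2$ for every non-principal ultrafilter $\omega$, so each of these cones fails to have a cut point, since the connectivity degree condition guarantees that no single-point removal disconnects.

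Taking $\Gamma$, $a$, $b$ as above gives precisely the conclusion of the corollary. All the real work has already been done in Theorem \ref{main}; the corollary is merely the specialization to the values $p = 1$ and $p = 2$ combined with the elementary translation between \emph{cut point} and \emph{connectivity degree equal to one}. In particular there is no substantive obstacle to overcome beyond the construction that proves Theorem \ref{main}.
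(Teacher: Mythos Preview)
Your proposal is correct and matches the paper's own treatment: the paper simply states that Theorem~\ref{main} ``obviously implies'' the corollary, having already noted that $S$ has a cut point iff $c(S)=1$. Your explicit choice of $p=1$ for the sequence $a$ and any prime (e.g., $p=2$) for $b$ is exactly the intended specialization.
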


There are two main ingredients in the proof of Theorem \ref{main}. The first one is a refined version of a construction from \cite{Ol-Os-S}, which makes use of central extensions to produce an (infinitely presented) group with  asymptotic cones of different connectivity degree. Then we use an improved center-preserving version of the Higman embedding which pursues work of  the second author in \cite{Ould} to obtain the desired finitely presented group. Our key addition to the results from \cite{Ould} is part (c) of Theorem \ref{thm-embedding}, which provides a uniform (in fact, quadratic) estimate of the distortion of the embedded subgroup and allows us to control the asymptotic geometry of the resulting finitely presented group.

The paper is organized as follows. In the next section we define asymptotic cones and collect some results about cut points and connectivity degree used in the proof of Theorem \ref{main}. In Section 3 we discuss the center-preserving version of the Higman embedding with quadratic distortion. The main construction and the proof of Theorem \ref{main} is contained in Section 4.

\section{Cut points in asymptotic cones and HNN-extensions}

Given a word $W$ in some alphabet, we denote by $\| W\| $ its length. If $X$ is a generating set of a group $G$, we do not distinguish between words in $X\cup X^{-1}$ and elements of $G$ represented by these words if no confusion is possible. We write $W\equiv V$ to express the letter--for--letter equality of words $W$ and $V$ in $X\cup X^{-1}$ and $W=V$ if $W$ and $V$ represent the same element of the group $G$. We also denote by $|g|_X$ the (word) length  of an element $g\in G$ and by $\Gamma (G,X)$ the Cayley graph of $G$ with respect to $X$. The formula $dist_X(h,g)=|h^{-1}g|_X$ defines a metric on $G$, called the \textit{word metric} (relative to $X$). Given a path $p$ in $\Gamma (G,X)$, $\ell (p) $ denotes its length and $p_-,p_+$ denote the beginning and the ending points of $p$, respectively.

A \textit{non-principal ultrafilter} $\omega$ on $\Bbb N$ is a finitely additive measure defined on all subsets $S$ of $\Bbb N$, such that $\omega(S) \in \{0, 1\}$, $\omega(\Bbb N)=1$, and $\omega(S)=1$ whenever $S$ is finite.  Given a bounded sequence of real numbers $(a_n)$, there exists a unique real number $a$ satisfying $\omega(\{n \in \Bbb N :  |a_n-a|<\epsilon\})=1$ for every $\epsilon >0$, called the \emph{limit of $(a_n)$ with respect to $\omega$} and denoted by $\lim^\omega a_n$.  Similarly, $\lim^\omega a_n=\infty$ if $\omega(\{n \in \Bbb N : a_n >M\})=1$ for every $M>0$.

Given two infinite sequences of real numbers $(a_n)$ and $(b_n)$ we write $a_n=o_\omega(b_n)$ if $\lim^\omega (a_n/b_n)=0$. Similarly, $a_n=\Theta_\omega(b_n)$ (respectively $a_n=O_\omega(b_n)$) means that $0<\lim^\omega (a_n/b_n)<\infty$ (respectively $\lim^\omega (a_n/b_n)<\infty$).

Let $(X_n, dist_n)_{n \in \Bbb N}$ be a sequence of metric spaces.  Fix an arbitrary sequence $e=(e_n)$ of points $e_n \in X_n$ called {\it observation points}. Consider the set $\mathcal F_e$ of sequences $g=(g_n)$, $g_n \in X_n$, such that $dist_n(g_n,e_n)\leq c$ for some constant $c=c(g)$.  Two sequences $(f_n)$ and $(g_n)$ of $\mathcal F_e$ are said to be \textit{equivalent} if $\lim^\omega dist_n(f_n, g_n)=0$. The equivalence class of $(g_n)$ is denoted by $(g_n)^\omega$. The \textit{$\omega$-limit of pointed metric spaces $(X_n, dist_n, e_n)$}, denoted by $\lim^\omega(X_n)_e$, is the quotient space of the equivalence classes where the distance between $f=(f_n)^\omega$ and $g=(g_n)^\omega$ is defined by $$dist (f,g)={\lim }^\omega dist_n(f_n,g_n).$$

An \textit{asymptotic cone}  Con$^\omega(X,e,d)$ of a metric space $(X, dist)$, where $e=(e_n)\subseteq X$ is a sequence of observation points and $d=(d_n)$ is an unbounded non-decreasing \textit{scaling sequence} of positive real numbers, is the $\omega$-limit of pointed spaces $X_n=(X, dist/d_n, e_n)$. The asymptotic cone is a complete space;  it is a geodesic metric space whenever $X$ is. We note that Con$^\omega(X, e,d)$ does not depend on the choice of $e$ if $X$ is homogeneous. For instance this the case if $X$ is a finitely generated group with a word metric, so in this case we will omit $e$ from the notation.  Clearly the asymptotic cone of a finitely generated group endowed with a word metric coincides with the corresponding asymptotic cone of its Cayley graph.

We start with a sufficient condition for the existence of cut points in asymptotic cones. The lemma below is an immediate corollary of the equivalence of conditions (2) and (3) from Proposition 3.24 in \cite{DMS}. Given a path $q$ in a metric space and two points $x,y\in q$, we denote by $q_{xy}$ the maximal subpath of $q$ connecting $x$ to $y$. Recall that a finitely generated group is {\it constricted} if all   its asymptotic cones have cut points.

\begin{lem}[Drutu, Mozes, Sapir {\cite{DMS}}]\label{DMS}
Let $G$ be a group generated by a finite set $X$. Suppose that the Cayley graph $\Gamma (G, X)$ contains a bi-infinite quasi-geodesic $q$ satisfying the following property. For every $C\ge 1$,  there exists $D\ge 0$ such that for every two vertices $x,y\in q$, every path of length at most $Cdist(x,y)$ connecting $x$ and  $y$ crosses the $D$-neighborhood of the middle third of $q_{xy}$. Then $G$ is constricted.
\end{lem}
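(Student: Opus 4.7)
The plan is to prove, for an arbitrary non-principal ultrafilter $\omega$ and scaling sequence $d=(d_n)$, that $\Con(G,d)$ has a cut point. Choose the observation sequence to lie on $q$. Because $q$ is a bi-infinite $(\lambda,c)$-quasi-geodesic, standard ultralimit computations yield a $\lambda$-bi-Lipschitz embedding $\bar q\colon \mathbb R\to \Con(G,d)$, defined by $\bar q(t)=\lim^\omega q(\lfloor td_n\rfloor)$ and satisfying $\lambda^{-1}|s-t|\le dist(\bar q(s),\bar q(t))\le \lambda|s-t|$. I claim that every point of the image of $\bar q$, in particular $m:=\bar q(0)$, is a cut point of $\Con(G,d)$; this establishes constrictedness since $\omega$ and $d$ were arbitrary.

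Suppose for contradiction that $m$ is not a cut point. Fix $a>0$, put $u:=\bar q(-a)$ and $v:=\bar q(a)$, and pick a path $\alpha\subset \Con(G,d)\setminus\{m\}$ of finite length $L$ joining $u$ to $v$; by compactness $\delta:=dist(\alpha,m)>0$. For $\epsilon\in(0,a)$, let $\beta_\epsilon$ be the concatenation of the subarc of $\bar q$ from $\bar q(-\epsilon)$ to $u$, then $\alpha$, then the subarc of $\bar q$ from $v$ to $\bar q(\epsilon)$; this path avoids $m$ (since $\bar q$ is injective and $0\notin[-a,-\epsilon]\cup[\epsilon,a]$), joins $u_\epsilon:=\bar q(-\epsilon)$ and $v_\epsilon:=\bar q(\epsilon)$, and has length at most $L+2\lambda(a-\epsilon)$. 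A standard lifting lemma realises $\beta_\epsilon$ as an ultralimit of paths $\beta_{\epsilon,n}\subset \Gamma(G,X)$ joining $u_{\epsilon,n}:=q(\lfloor -\epsilon d_n\rfloor)$ to $v_{\epsilon,n}:=q(\lfloor \epsilon d_n\rfloor)$ with $\ell(\beta_{\epsilon,n})\le (L+2\lambda a+1)d_n$ $\omega$-almost surely. Since $dist(u_{\epsilon,n},v_{\epsilon,n})\ge (2\epsilon/\lambda-o_\omega(1))d_n$, some constant $C=C(\epsilon,L,a,\lambda)$ satisfies $\ell(\beta_{\epsilon,n})\le C\cdot dist(u_{\epsilon,n},v_{\epsilon,n})$ $\omega$-a.s. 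Applying the hypothesis of the lemma with this $C$ yields $D\ge 0$ such that $\beta_{\epsilon,n}$ passes within $D$ of the middle third of $q_{u_{\epsilon,n}v_{\epsilon,n}}$; passing to the $\omega$-limit (where $D/d_n\to 0$) produces a point $w_\epsilon\in \beta_\epsilon\cap \bar q([-\epsilon/3,\epsilon/3])$.

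Since $\bar q$ is injective, $\bar q([-a,-\epsilon])\cup\bar q([\epsilon,a])$ is disjoint from $\bar q([-\epsilon/3,\epsilon/3])$, so $w_\epsilon$ must lie on the $\alpha$-piece of $\beta_\epsilon$, that is, $w_\epsilon\in\alpha$. But then $\delta\le dist(w_\epsilon,m)\le \lambda\cdot\epsilon/3$, which fails as soon as $\epsilon<3\delta/\lambda$, the desired contradiction. The main obstacle I anticipate is the lifting step: realising an arbitrary rectifiable path in $\Con(G,d)$ as an $\omega$-ultralimit of paths in $\Gamma(G,X)$ with suitably controlled length and endpoints on prescribed vertices of $q$. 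This is standard but somewhat technical machinery for geodesic ultralimits; once in hand, the middle-third hypothesis together with the injectivity of $\bar q$ and a single compactness argument close the proof, which is why the authors simply invoke the equivalence (2)$\Leftrightarrow$(3) of Proposition 3.24 in \cite{DMS}.
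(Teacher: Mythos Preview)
The paper does not give a proof of this lemma: it is stated as an immediate corollary of the equivalence of conditions (2) and (3) of \cite[Proposition~3.24]{DMS}, exactly as you note in your final sentence. Your proposal is therefore not an alternative to the paper's proof but a reconstruction of the argument behind that citation, and as such it is essentially correct. The hypothesis of the lemma is the Morse property for $q$ (condition (2) in \cite{DMS}), and you correctly deduce that every asymptotic cone has a cut point on the limiting quasi-geodesic $\bar q$ (condition (3)).

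Two small technical points. First, when you assume that $m$ is not a cut point you only obtain a \emph{continuous} path $\alpha$ in $\Con(G,d)\setminus\{m\}$ joining $u$ and $v$, and you silently upgrade this to a path of finite length. This is legitimate---replace $\alpha$ by a sufficiently fine piecewise-geodesic interpolation in the geodesic space $\Con(G,d)$; each short geodesic segment stays in the $\delta/2$-complement of $m$---but it deserves a sentence. Second, the conclusion ``$w_\epsilon\in\beta_\epsilon$'' really means that $w_\epsilon$ lies on the $\omega$-limit of the lifted paths $\beta_{\epsilon,n}$, which a priori is only Hausdorff-close to $\beta_\epsilon$ rather than equal to it; since that limit still avoids a definite neighbourhood of $m$, the inequality $\delta'\le dist(w_\epsilon,m)\le \lambda\epsilon/3$ and the contradiction go through unchanged. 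These details are precisely what is handled in the proof of Proposition~3.24 of \cite{DMS}, which is why the authors cite rather than reproduce it.
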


The main result of this section is the following.

\begin{lem}\label{cut points in HNN}
Let $B$ be a group generated by a finite set $Y$, $A\le B$. Let $$U=\langle B, t\mid a^t=a, \; a\in A\rangle . $$ We endow $B$ and $U$ with word metrics with respect to $Y$ and $Y\cup \{ t\}$, respectively.
Then the following hold.
\begin{enumerate}
\item[(a)] For every $b\in B$ we have $|b|_Y=|b|_{Y\cup\{t\}}$. In particular, the inclusion $B\le U$ induces an isometric embedding $\Con (B,d)\to  \Con (U,d)$ for every $d$ and $\omega$.

\item[(b)] Suppose that there exists $b\in B$ such that $A^b\cap A=\{ 1\}$.
Then $U$ is constricted.
\end{enumerate}
\end{lem}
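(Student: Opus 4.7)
Part (a) is a direct application of Britton's Lemma. Let $b\in B$ and let $W$ be any word in $Y\cup\{t\}$ representing $b$. If $W$ contains a stable letter, then since $W$ evaluates to an element of the base group, Britton's Lemma produces a pinch $t^{\epsilon} a t^{-\epsilon}$ in $W$ with $a\in A$; using the relation $a^t=a$, we may replace this pinch by $a$, strictly decreasing both $\|W\|$ and the number of stable-letter occurrences. Iterating, we obtain a word in $Y$ of length at most $\|W\|$ representing $b$; therefore $|b|_Y\le |b|_{Y\cup\{t\}}$, and the reverse inequality is trivial. The inclusion of Cayley graphs is thus an isometry on $B$, and this property passes to asymptotic cones.

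For part (b), the plan is to apply Lemma~\ref{DMS} to the bi-infinite quasi-geodesic $q$ obtained from the orbit of a suitably chosen hyperbolic element of $U$ acting on its Bass--Serre tree $T$. A natural candidate is $g:=tb$, with $q$ running through the vertices $\{g^n:n\in\Bbb Z\}$ joined by the edge-path labeled $tb$. Since $g$ acts on $T$ with translation length $1$, we have $|n|\le |g^n|_{Y\cup\{t\}}\le |n|(1+|b|_Y)$, the lower bound coming from Britton's Lemma and the upper bound from the defining word, so $q$ is a quasi-geodesic.

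To verify the middle-third condition, consider any path $p$ from $g^m$ to $g^n$ of length at most $C\cdot dist(g^m,g^n)$ and project it to $T$ via $u\mapsto uB$. The projection is a walk from $g^m v_0$ to $g^n v_0$ in $T$, where $v_0$ is the base vertex, and because $T$ is a tree it must traverse every axis vertex $g^k v_0$ for $k\in[m,n]$. Thus for each such $k$ there is a vertex $u_k$ on $p$ lying in the slab $g^k B$, say $u_k=g^k b_k$ with $b_k\in B$, and the task reduces to showing that some $k$ in the middle third admits such a $u_k$ with $|b_k|_Y\le D$, where $D$ depends only on $C$.

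This last step is the main obstacle and is where the hypothesis $A^b\cap A=\{1\}$ enters. Any excursion of $p$ into a slab $g^k B$ and back onto the axis is governed by Britton's Lemma: each axis transition from $g^{k-1}v_0$ to $g^k v_0$ forces the accompanying $B$-element to lie in $A$ (as $g^k=g^{k-1}\cdot tb$ and only elements of $A$ commute with $t$), so two successive axis transitions framing a $B$-detour produce, upon reduction, an element of $A\cap bAb^{-1}$, which by hypothesis must be trivial. Consequently, deep excursions into a slab cannot cascade along the axis; they must be completed and undone locally, so for any $k$ in the middle third one can extract a vertex $u_k$ on $p$ with $|b_k|_Y$ uniformly bounded. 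This yields the required $D=D(C)$, and Lemma~\ref{DMS} concludes that $U$ is constricted.
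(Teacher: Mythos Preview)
Your treatment of part (a) is correct and essentially identical to the paper's.

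For part (b), your setup is right and matches the paper: take $g=tb$, use the bi-infinite path labeled $\cdots tbtb\cdots$ as the quasi-geodesic, and verify the hypothesis of Lemma~\ref{DMS}. The Bass--Serre picture you invoke is equivalent to the paper's direct Britton-Lemma bookkeeping. But your final paragraph, where the hypothesis $A^b\cap A=\{1\}$ is supposed to do its work, is where the argument breaks down.

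Concretely: tracking the first crossings of the axis edges (in your language) or decomposing $W\equiv w_0tw_1t\cdots tw_{3n}$ (in the paper's), one finds elements $a_k\in A$ with $(tb)^ka_k$ lying on $p$, and the $B$-segment between consecutive crossings is $w_k=a_{k-1}^{-1}ba_k$. This is the relation that governs everything, and it is \emph{not} what you wrote: the element produced is $a_{k-1}^{-1}ba_k$, not something in $A\cap bAb^{-1}$. The role of $A^b\cap A=\{1\}$ is that it makes the map $(a,a')\mapsto a^{-1}ba'$ from $A\times A$ to $B$ \emph{injective}; hence for each $C$ there are only finitely many pairs $(a,a')$ with $|a^{-1}ba'|_Y<6C$, which yields an explicit $D=D(C)$ bounding the norms of those finitely many pairs. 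Now a straightforward length count finishes: if $|a_k|_Y\ge D$ for every $k$ in the middle third, then $|w_k|_Y\ge 6C$ for all such $k$, forcing $\ell(p)>C\ell(r)$; so some $|a_k|_Y<D$ with $k$ in the middle third, and $p$ meets the $D$-neighborhood as required.

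Your proposal contains neither this injectivity observation nor any quantitative link between $C$ and $D$; the sentence ``deep excursions into a slab cannot cascade along the axis'' is an intuition, not an argument, and your conclusion that \emph{every} $k$ in the middle third admits a bounded $b_k$ is stronger than what is needed and not established (the paper only obtains \emph{some} such $k$). Also note the paper disposes of the degenerate case $A=\{1\}$ separately (then $U$ is a nontrivial free product), which guarantees $b\notin A$ so that $w_k\ne 1$ and $r$ is genuinely geodesic.
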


\begin{proof}
To prove part (a), observe that  if an element $b\in U$ is represented as a word $W$ in the alphabet $Y^{\pm 1}\cup \{ t^{\pm 1}\}$, then $t$-reductions (i.e., passing from subwords of the form $t^{-1}at$ or $tat^{-1}$ to $a$) decrease the length of $W$. Hence the shortest word $W_0$ representing $b$ is necessarily reduced, i.e., does not contains subwords of the form $t^{-1}at$ or $tat^{-1}$ . If $b\in B$, then the Britton lemma easily implies that $W_0$ does not contain $t^{\pm 1}$, i.e., $W_0$ is a word in $Y^{\pm 1}$. Hence $|b|_Y=|b|_{Y\cup\{t\}}$. The statement ``in particular" follows immediately from the definition of an asymptotic cone.

To prove (b) we first note that if $A=\{ 1\} $, then the lemma is trivial as $U$ is a nontrivial free product and hence it is hyperbolic relative to a proper subgroup in this case. The later condition implies the existence of cut points \cite{DS}. In what follows we assume that $A\ne \{ 1\} $ and, in particular, $b\notin A$.

Without loss of generality we may assume that $b\in Y$. Let $X=Y\cup \{ t\}$. By Lemma~\ref{DMS} it suffices to show that for every $C$, there exists $D\ge 0$ such that every path $r$ in $\Gamma (U, X)$ labeled by a power of $(tb)^3$ is geodesic and any other path $p$ connecting $r_-$ to $r_+$ of length
\begin{equation}\label{lple}
\ell(p)\le C\ell (r)
\end{equation}
intersects the $D$-neighborhood of the middle third of $r$. Indeed then any bi-infinite path $q$ labelled by $\cdots tbtb\cdots $ satisfies the assumptions of Lemma \ref{DMS}.

Let $r$ and $p$ be as above. Let $\lab (r)\equiv (tb)^{3n}$ and let $\lab (p)\equiv W$. Thus $(tb)^{-3n}W=1$ in $G$. Since $b\notin A$, the Britton Lemma \cite{shu-lyn} implies that $W\equiv w_0 t w_1 t\ldots w_{3n-1}t w_{3n}$ for some words $w_0, \ldots , w_{3n}$ in the alphabet $Y\cup Y^{-1}$ such that for every $0\le k\le 3n$, the word
\begin{equation}\label{ak}
a_k\equiv (tb)^{-k} w_0t\ldots w_{k-1}tw_k
\end{equation}
represents an element of $A$. Note that $w_0, \ldots , w_{3n}$ may contain $t^{\pm 1}$. In the group $U$ we have
\begin{equation}\label{wk}
\begin{array}{rcl}
  a_{k-1}^{-1} ba_k & = & t^{-1}a_{k-1}^{-1} t ba_k =\\&&\\
  &  & t^{-1}\large(w_{k-1}^{-1}t^{-1}\ldots w_1^{-1}t^{-1}w_0^{-1} (tb)^{k-1}\large) t b \large((tb)^{-k} w_0t\ldots w_{k-1}tw_k\large) =w_k.
\end{array}
\end{equation}
In particular, $w_k\ne 1$ in $U$ for all $0<k\le 3n$ as $b\notin A$. Hence $\ell (p)=\| W\| \ge 6n=\ell (r)$, i.e., $r$ is geodesic.

Observe that there exists $D>0$ such that for every $a, a^\prime \in A$, the inequality $\max \{ |a|_Y, |a^\prime |_Y\} \ge D$ implies $|aba^\prime|_Y\ge 6C$. Indeed otherwise there would exist distinct pairs $(a_1, a_1^\prime ), (a_2, a_2^\prime )\in A\times A$ such that $a_1ba^{\prime}_1=a_2ba_2^{\prime}$. This would imply $(a_2^{-1}a_1)^b=a_2^\prime (a_1^\prime )^{-1}$, which contradicts the assumption $A^b\cap A=\{ 1\} $.

Let $D$ be chosen to satisfy the above condition. If for every $n\le k\le 2n$, we have $|a_k|_Y\ge D$, then $|w_k|_Y\ge 6C$ for every $n+1\le k\le 2n$ by (\ref{wk}) and the choice of $D$. Therefore, $$\ell (p)=\| W \| > \sum\limits_{k=n+1}^{2n} |w_k|_Y \ge 6Cn =C\ell (r),$$ which contradicts (\ref{lple}). Hence $|a_k|_Y< D$ for some $n\le k\le 2n$. By (\ref{ak}) this means that $p$ intersects the $D$-neighborhood of the middle third of $r$.
\end{proof}

\section{Quadratically distorted center-preserving Higman embeddings}

The famous Higman theorem states that every recursively presented group embeds in a finitely presented one. This result was significantly improved by Olshanskii \cite{Ol}, who proved the following.

\begin{thm} [Olshanskii{ \cite[Theorem 3]{Ol}}]  \label{thm-Ol} Let $G$ be a group with a finite generating set $X$ and a recursively enumerable set of defining relations. Then there exists an isomorphic embedding of $G$ in a finitely presented group $H$  generated by a finite set $Y$ such that $|g|_X=|g|_Y$ for each $g \in G$.
\end{thm}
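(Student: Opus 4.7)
The plan is to follow Olshanskii's refinement of the Sapir--Birget--Rips construction: simulate a recognition algorithm for the relators of $G$ by a group-theoretic machine, embed that simulation into a finitely presented group $H$, and then control the word length of elements of $G$ in $H$ through a van Kampen diagram analysis.

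First, since the set of defining relations of $G$ is recursively enumerable, I would build an S-machine $M$ (a group-theoretic variant of a Turing machine) whose input alphabet is $X^{\pm 1}$ and whose accepted language is exactly $\{w \in (X^{\pm 1})^\ast : w =_G 1\}$. A crucial point is to arrange that every accepted word $w$ admits an accepting computation whose space usage is at most linear in $|w|$; this can be done by standard but careful recoding of Turing computations into S-machine computations.

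Second, I would associate to $M$ the finitely presented group $H = H_M$ whose generating set $Y$ is the union of $X$ with the state, tape, and command letters of $M$, and whose defining relators encode the transition commands of $M$ together with a ``hub'' relator identifying accepting configurations with the identity. The inclusion $X \hookrightarrow Y$ induces a canonical map $G \to H$; that it is injective follows from the standard analysis showing that a minimal van Kampen diagram $\Delta$ over the presentation of $H$ with a boundary label from $(X^{\pm 1})^\ast$ decomposes into $M$-\emph{bands} (trapezia recording partial computations of $M$) and disks labelled by relators of $G$, so that any $Y$-word in the kernel traces back to an $X$-word already killed by the presentation of $G$.

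The main obstacle is to upgrade this to the isometric statement $|g|_X = |g|_Y$. The inequality $|g|_Y \le |g|_X$ is immediate from $X \subseteq Y$; the converse requires that every geodesic $Y$-word $W$ representing $g \in G$ can be replaced by an $X$-word for $g$ of length at most $\|W\|$. The strategy is to take a minimal van Kampen diagram $\Delta$ whose boundary is the product of $W$ and the inverse of a shortest $X$-representative $w$ of $g$, and to read off along the bands of $\Delta$ an initial configuration of $M$ whose input portion is an $X$-word equal to $g$. Using the linear-space property of $M$ together with the way commands contribute to the boundary, one argues that the length of this input $X$-word is bounded by $\|W\|$, which forces $|g|_X \le \|W\| = |g|_Y$. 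The technical heart of the argument is ruling out parasitic cancellations and ``shortcut'' subdiagrams that would allow $\|W\|$ to be strictly smaller; this is precisely where Olshanskii's careful choice of $M$ and of the auxiliary relators of $H_M$ comes in, and it is the step I expect to be the main difficulty of the proof.
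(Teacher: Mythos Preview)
The paper does not prove this statement at all: Theorem~\ref{thm-Ol} is quoted from Olshanskii's paper \cite{Ol} and used as a black box (see Remark~\ref{rem1}, where it is invoked to choose the group $H$). There is therefore no proof in the paper to compare your proposal against.

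As for the proposal itself, your outline is broadly in the spirit of the machine-based Higman embeddings, but a couple of points deserve caution. First, the attribution to ``Sapir--Birget--Rips'' is anachronistic: Olshanskii's 1997 paper predates the S-machine formalism, and his construction uses a somewhat different encoding of Turing computations into group relations, so if you want to sketch \emph{his} proof you should follow the setup of \cite{Ol} rather than the later S-machine technology. Second, your description of the isometry step is too optimistic. The inequality $|g|_X \le |g|_Y$ is not obtained by extracting a single input word from a band structure in a diagram between $W$ and a geodesic $X$-word; rather, Olshanskii designs the presentation of $H$ so that any reduced diagram with boundary in $Y^{\pm 1}$ and representing an element of $G$ can be surgically reduced, cell type by cell type, to a diagram over $G$ without increasing boundary length. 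The ``linear space'' heuristic you invoke does not by itself give this; one needs the specific combinatorics of Olshanskii's relators (and his hierarchy of cell types) to ensure that each reduction step is length-nonincreasing. So while your plan identifies the right arena (van Kampen diagrams over a machine group), the mechanism you describe for the hard inequality is not the one that actually works, and filling it in would essentially require reproducing the core of \cite{Ol}.
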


Another improvement of the Higman's theorem was obtained in \cite{Ould}, where the second author proved that every finitely generated recursively presented group $G$ embeds into a finitely presented group $H$ in such a way that the center of $G$ coincides with that of $H$. The main result of this section combines the main features of both improvements, although our distortion estimate is not as good as in the Olshanskii's theorem.

\begin{thm}  \label{thm-embedding} Let $G$ be finitely generated recursively presented group with a finite generating set $A$. Then $G$ embeds into  a group $\Gamma$ with a finite generating set $B$ such that the following conditions hold.
\begin{enumerate}
\item[(a)] $\Gamma $ is finitely presented.

\item[(b)] $Z(G)=Z(\Gamma)$.

\item[(c)] For any $g \in G$, $\sqrt{|g|_A }\leq |g|_B \leq |g|_A$.

\item[(d)] $\Gamma /Z(\Gamma )$ is constricted.
\end{enumerate}
\end{thm}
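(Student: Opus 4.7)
The plan is to follow the Higman--Olshanskii--Ould scheme: encode the recursive presentation of $G$ by a computational device and realize its transitions as conjugations by stable letters in a tower of HNN extensions, culminating in a finitely presented group $\Gamma$ containing $G$. Properties (a) and (b) will be inherited from the construction in \cite{Ould}, which is arranged so that every HNN extension uses associated subgroups whose intersection with the centralizer of $G$ is contained in $Z(G)$; consequently $Z(\Gamma) = Z(G)$. The upper bound $|g|_B \leq |g|_A$ in (c) is automatic once the embedding is set up so that $A \subseteq B$.

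The new content is the lower bound $|g|_B \geq \sqrt{|g|_A}$. I would arrange the encoding to use a computational model running in at most quadratic time (in the spirit of the devices underlying Theorem~\ref{thm-Ol}), so that any $g \in G$ represented by a $B$-word $w$ of length $m$ can be unpacked, by an inductive application of Britton's Lemma up the HNN tower, into an $A$-expression of length $O(m^2)$. This yields $|g|_A \leq C \, |g|_B^2$ for a uniform constant $C$, and after absorbing $C$ into the generating set one obtains the claimed inequality.

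For part (d), the outermost HNN extension in the tower can be arranged to have the form required by Lemma~\ref{cut points in HNN}(b): the associated subgroup $A_0$ satisfies $A_0^b \cap A_0 = \{1\}$ for some $b \in B_0$. This almost-malnormality is naturally produced by the machine encoding, since distinct machine configurations yield disjoint conjugates of the coding subgroup. It descends to $\Gamma/Z(\Gamma)$ because the coding subgroups project faithfully modulo the center, and Lemma~\ref{cut points in HNN}(b) then provides cut points in every asymptotic cone of $\Gamma/Z(\Gamma)$.

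The principal obstacle is coupling (b) and (c) simultaneously. The center-preserving construction in \cite{Ould} is not a priori length-efficient, while the length-preserving embedding of Theorem~\ref{thm-Ol} is not a priori center-preserving. Redesigning the HNN tower so that each intermediate extension is both center-stable and length-efficient — in particular ensuring that the auxiliary computations on the $G$-part do not create short shortcuts for long $G$-words, and that the quadratic estimate is preserved under composition of the HNN extensions in the tower — is where the bulk of the technical work will lie.
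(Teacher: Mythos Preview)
Your high-level plan is sound, but you are making the problem harder than it is, and the ``principal obstacle'' you identify is in fact not an obstacle at all. The paper does \emph{not} redesign the HNN tower or switch to a quadratic-time computational device. It uses the Ould construction from \cite{Ould} verbatim, with a single modular change: at the step where the recursively presented group $F_R$ is embedded into a finitely presented group $H$, one invokes Olshanskii's \emph{isometric} embedding (Theorem~\ref{thm-Ol}) rather than an arbitrary Higman embedding. No other part of the tower is touched.

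The quadratic distortion then arises not from machine time but from the algebraic structure of the single HNN extension
\[
K=\langle H\times G_0,\, s \mid s^{-1}(l,g)s=(l,\phi(l)g),\ l\in L,\ g\in Z(G_0)\rangle.
\]
If $g\in G_0$ is written as a $K$-word with $n$ occurrences of $s^{\pm1}$ and $H$-syllables $h_0,\dots,h_{n+1}$, then iterated Britton reductions produce at most $n$ correction factors $\phi(h_j)^{\pm1}$, each of $G_0$-length $\le |h_j|_T$ (here the isometric choice of $H$ is used). Summing gives $|g|_{G_0}\le n\sum_i |h_i|_T+\sum_i|g_i|_{G_0}\le |g|_K^2$. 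The remaining layers (the amalgam $U=K\ast_G M$, the small-cancellation quotient $\Gamma_0=U/N$, and the final HNN with stable letter $q$) are all \emph{undistorted} on $G$, the small-cancellation step being handled by a short Greendlinger-type argument. So (b) and (c) are not in tension: (b) is inherited from \cite{Ould} untouched, and (c) comes from a three-line counting lemma localized entirely in $K$.

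For (d), the almost-malnormality does not come from any machine encoding. The outermost extension is $\Gamma=\langle \Gamma_0,q\mid g^q=g,\ g\in G\rangle$, and the relevant conjugator is the element $z$ from $G_0=\langle G,z\mid g^z=g,\ g\in Z(G)\rangle$: by construction $G^z\cap G\le Z(G)$, so in $\Gamma/Z(\Gamma)$ the associated subgroup $G/Z(G)$ satisfies $(G/Z(G))^z\cap(G/Z(G))=\{1\}$, and Lemma~\ref{cut points in HNN}(b) applies directly.

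In short: keep the Ould tower as a black box, insert Olshanskii's isometric embedding at the $F_R\hookrightarrow H$ step, and extract the quadratic bound from the HNN extension $K$ alone. The redesign you are contemplating is unnecessary.
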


\begin{proof}
The proof of the theorem is based on the construction from \cite{Ould}; we recall  it briefly in what follows. For more details and proofs the reader is referred to \cite{Ould}. Multiplying $G$ directly by a finitely generated recursively presented centerless non-abelian group (say, $S_3$), we can assume that $G$ is non-abelian. In particular, $Z(G)\lvertneqq G$. Let
$$
G_0=\<G, z| g^z=g, g \in Z(G)\>.
$$

Note that an element of a group belongs to its center if and only if it commutes with all generators. Since $G$ is finitely generated and recursively presented, there is an obvious algorithm which enumerates all elements of  $Z(G)$. Hence  the group $G_0$ is recursively presented. Since  $Z(G)\lvertneqq G$, we obviously have $Z(G)=Z(G_0)$.  Let $A=\{a_1, \dots, a_n\}$ and let $F_X$ be the free group with basis $X=\{x_1, \dots, x_n, x_{n+1}\}$. Thus there is an isomorphism  $v : F_X /R \rightarrow G_0$, where $v (x_i ) = a_i$ for $i = 1, \dots ,n$, $v (x_{n+1}) = z$, and
$R$ is the normal closure of the the set of relations of $G_0$ (which is recursively enumerable).   Let
$$
F_R=\<F_X, d| r^d=r, r\in R\>.
$$

If $w$ is a word in the generators of $F_X$, let $\bar w$ denote the word of $G_0$ obtained by replacing   $x_i$ with $a_i$ for $i=1, \dots, n$, and $x_{n+1}$ with $z$. Let $L=\langle F_X\cup F_X^d\rangle \le F_R$. Obviously $L$ is the free product of $F_X$ and $F_X^d$ with $R$ amalgamated.  Thus the map $\phi $ defined by $\phi (w)=\bar w$ and $\phi(w^d)=1$ for every $w\in F_X$ extends to a homomorphism $\phi : L \rightarrow G_0 $.

Define a map $\psi : L \times Z(G_0) \rightarrow L \times G_0$ by
$$
\psi(l,g)=(l, \phi(l)g).
$$
Then $\psi$ is an injective homomorphism. By Higman's embedding theorem, we can embed $F_R$ into a finitely presented group $H$.

\begin{rem}\label{rem1}
The only property of the group $H$ which is used in \cite{Ould} is finite presentability; the particular structure of the group $H$ is completely irrelevant. Hence, by Theorem  \ref{thm-Ol} we can choose $H$ so that it has a finite generating set $T$ such that  $|x|_{X\cup\{d\}}=|x|_T$ for any $x \in F_R$.
\end{rem}

Let
$$
K=\<H \times G_0, s| s^{-1}(l,g)s=(l, \phi(l)g), l \in L, g \in Z(G_0)\>
$$
and let $M=G^\prime \times \<t\> $, where $G^\prime $ is an isomorphic copy of $G$. Viewing $G$ as a subgroup of $G_0$ and hence as a subgroup of $K$, we form the free product with amalgamation
$$
U=K*_G M.
$$
Let
$$
r=(s^{-1}z)\cdot t \cdots z\cdot t^2 \cdots z\cdot t^{80}.
$$

Then the symmetrized set generated by $r$ satisfies the small cancellation condition $C'(1/70)$ with respect to the amalgamated free product structure of $U$ (for details about small cancellation theory  in this context  see \cite{shu-lyn}). Let $N$ be the normal closure of $r$ in $U$.  Then $\Gamma_0=U/N$ is a finitely presented group, $K$ and $M$ embed in $\Gamma _0$, and
\begin{equation}\label{center}
Z(\Gamma_0)=Z(G_0)=Z(G).
\end{equation}
For the detailed proofs of all the above facts, we refer the reader to \cite{Ould}. Finally let $$\Gamma =\< \Gamma _0, q\mid g^q=g,\, g\in G\> .$$  We are going to show that the group $\Gamma $ satisfies conditions (a)--(d). For the convenience of the reader we provide the diagram showing relations between the groups constructed above;  arrows correspond to (isomorphic) embeddings.
$$
\begin{CD}
L @>>> F_R @>>> H @>>> K @>>> \Gamma _0 @>>> \Gamma \\
@. @. @. @AAA @AAA @.\\
@. @. @. G_0 @. M @.\\
@. @. @. @AAA @. @.\\
@. @. @. G @.  @.
\end{CD}
$$

Since $G$ is finitely generated, $\Gamma $ is finitely  presented. Further we note that the quotient group $\Gamma / Z(\Gamma)$ is isomorphic to the HNN-extension of a centerless group with proper associated subgroups. It easily follows from the normal form theorem for HNN-extensions \cite[Theorem 2.1, Ch. IV]{shu-lyn} that $\Gamma / Z(\Gamma)$ is centerless as well. Hence property (b) follows from (\ref{center}).

The proof of (c) is divided into few steps. Let $T$ be a generating set of $H$ chosen according to Remark \ref{rem1}. Set $C=A \cup \{z\}$ and  $D=T \cup A \cup \{z,s\}$. Clearly $C$ and $D$ generate $G_0$ and $K$, respectively.

Below we will often use the following obvious observation without any reference. Let $G_1$ and $G_2$ be groups generated by sets $X_1$ and $X_2$. Endow $G_1$, $G_2$, and $G_1\times G_2$ by the word metrics corresponding to $X_1$, $X_2$, and $X_1\cup X_2$, respectively. Then the natural embeddings $G_i\to G_1\times G_2$, $i=1,2$, are isometric.

\begin{lem}\label{lem-pre-K}
Let $g \in G_0$ and  let $(h_0, g_0), (h_1, g_1), \dots, (h_n, g_n), (h_{n+1}, g_{n+1})$  be a sequence of elements from $H \times G_0$,  such that
$$
g=(h_0, g_0)s^{\epsilon_0} (h_1, g_1)s^{\epsilon_1}\cdots(h_n, g_n)s^{\epsilon_n} (h_{n+1}, g_{n+1})
$$
for some $\epsilon_1, \dots, \epsilon_n \in\{\pm 1\}$.
Then
\begin{equation}\label{gC}
|g|_C \leq n\sum_{0 \leq i \leq n+1}|h_i|_T+\sum_{0 \leq i \leq n+1}|g_i|_C.
\end{equation}
\end{lem}
\begin{proof}
The proof is by induction on $n$. The base of the induction corresponds to $n=-1$ (i.e., $g=(h_0, g_0)$), in which case the statement is obvious.
Suppose now that $n\ge 0$. Then by the Britton Lemma $n\ge 1$ and there exists $j$ such that $(h_j, g_j) \in L \times Z(G_0)$ or $(h_j, g_j) \in \psi(L \times Z(G_0))$ and $s^{\epsilon_{j-1}}(h_j,g_j)s^{\epsilon_j}=(h_j, \phi(h_j)^{\epsilon_j}g_j)$. Thus  we get the new sequence
$$(h_0, g_0), (h_1, g_1), \dots, (h_{j-1}h_jh_{j+1}, g_{j-1}\phi(h_j)^{\epsilon_j}g_jg_{j+1}),\dots,  (h_{n+1}, g_{n+1})$$
of smaller length.
By induction and the triangle inequality, we obtain
$$
|g|_C \leq (n-2)\sum_{0 \leq i \leq n+1}|h_i|_{T}+\sum_{0 \leq i \leq n+1}|g_i|_C+ |\phi(h_j)^{\epsilon_j}|_C.
$$
Note that  $|\phi(h_j)^{\epsilon_j}|_C \leq |h_j|_{X\cup\{ d\}} =|h_j|_{T}$ by the definition of $\phi $. This and the previous inequality imply (\ref{gC}).
\end{proof}

\begin{cor} \label{lem-K}
For any $g \in G_0$,  $|g|_C \leq |g|_D^2$.
\end{cor}

\proof
Let $g \in G_0$. A shortest word in the alphabet $D\cup D^{-1}$ representing $g$ yields a sequence $(h_0, g_0), (h_1, g_1), \dots, (h_n, g_n), (h_{n+1}, g_{n+1})$   as in Lemma \ref{lem-pre-K} such that
\begin{equation}\label{gD}
|g|_D=(n+1)+\sum_{0 \leq i \leq n+1}(|h_i|_{T}+|g_i|_C).
\end{equation}
Combining Lemma \ref{lem-pre-K} and (\ref{gD}), we obtain
$$
|g|_C \leq n\sum_{0 \leq i \leq n+1}|h_i|_T+\sum_{0 \leq i \leq n+1}|g_i|_C\le |g|_D\sum_{0 \leq i \leq n+1}|h_i|_T+\sum_{0 \leq i \leq n+1}|g_i|_C
\leq |g|_D^2
$$
as required.  \qed

Let $B_1= D \cup \{t\}$ and let $\pi : U \rightarrow \Gamma$ be the natural homomorphism. Set $B_2=\pi(B_1)$. Obviously $B_2$ generates  $\Gamma$.

\begin{lem}
\label{lem-U}For any $g \in G$, $|g|_{D}=|\pi(g)|_{B_2}$.
\end{lem}

\proof  Clearly $|\pi(g)|_{B_2} \leq |g|_{D}$ and it remains to show that $|g|_{D} \leq |\pi(g)|_{B_2}$.  Let  $g \in G$ and  $W$ a word  in the alphabet $B_1\cup B_1^{-1}$ such that $\pi(W)=\pi(g)$ and $$|\pi(g)|_{B_2}=\|W\|.$$ Let
$$
W\equiv v_0 t^{\epsilon_0} \cdot v_1t^{\epsilon_1} \cdots v_n t^{\epsilon_n}v_{n+1},
$$
where each $v_i$ is a (possibly empty) word in $D$ and $\epsilon_i =\pm 1$. Obviously the word $W$ is reduced with respect to the (obvious) HNN-structure of $U$ with stable letter $t$. Indeed as the stable letter commutes with the associated subgroup, making reductions decreases the length of the word.

If $g^{-1}w \neq_U1$, then, depending on whether $ g^{-1}v_0 \in G$, $v_{n+1} \in G$ or not, one of the following sequences
$$
(g^{-1}v_0t^{\epsilon_0}, v_1, t^{\epsilon_1}, v_2, \dots, t^{\epsilon_{n-1}}, v_n, t^{\epsilon_n}v_{n+1}),
$$
$$
(g^{-1}v_0, t^{\epsilon_0}, v_1,t^{\epsilon_1}, v_2, \dots, t^{\epsilon_{n-1}}, v_n, t^{\epsilon_n}v_{n+1}),
$$
$$
(g^{-1}v_0, t^{\epsilon_0}, v_1,t^{\epsilon_1}, v_2, \dots, t^{\epsilon_{n-1}}, v_n, t^{\epsilon_n}, v_{n+1}),
$$
$$
(g^{-1}v_0t^{\epsilon_0}, v_1,t^{\epsilon_1}, v_2, \dots, t^{\epsilon_{n-1}}, v_n, t^{\epsilon_n}, v_{n+1})
$$
is reduced with respect to the amalgamated free product structure of $K*_G(G\times \<t|\>)$. By \cite[Theorem 11.2, Chapter V]{shu-lyn}, one of the previous sequences has  a subsequence  of a cyclic permutation   of the sequence
\begin{equation}\label{seq}
(s^{-1}z ,  t ,  z,  t^2 ,  z, \dots,z,  t^{80}),
\end{equation}
whose length is bigger than $(1-3/70)160$.  Since $(1-3/70)160>150>(|r|/2+4)$, we conclude that the sequence
$$
(v_1,t^{\epsilon_1}, v_2, \dots, t^{\epsilon_{n-1}}, v_n)
$$
has  a subsequence of the sequence appearing in (\ref{seq}) whose length is bigger than $|r|/2$.  Since $z \in D$,  replacing that subsequence by the corresponding shorter subsequence, we get a contradiction to the choice of $W$.

Thus $g=_UW$.  Since $W$ is reduced in $U$ with respect to the HNN-structure of $U$, $W$ it does not involve $t$.  Therefore
$$
|\pi(g)|_{B_2}=\|W\| \geq |W|_D=|g|_D,
$$
and we get the required result. \qed

We are now ready to complete the proof of part (c) of Theorem \ref{thm-embedding}. Let $B=B_2\cup \{ q\} $. Applying subsequently part (a) of Lemma \ref{cut points in HNN}, Corollary \ref{lem-K}, Lemma \ref{lem-U}, and part (a) of Lemma \ref{cut points in HNN} again, we obtain
$$
|g|_A=|g|_C \leq |g|_D^2= |\pi(g)|_{B_2}^2= |\pi (g)|_{B}^2.
$$
The inequality $|\pi (g)|_B\le |g|_A$ is obvious. This concludes the proof of (c).

Finally to prove (d) we note that $G^z \cap G \leq Z(G)$ in $G_0$.  By \cite[Theorem 11.2, Ch. V]{shu-lyn}, the natural map $K\to U/N$ is injective. Hence $G^z \cap G \leq Z(G)=Z(\Gamma)$ in $\Gamma$. Observe that $\Gamma /Z(\Gamma)$ is isomorphic to the HNN-extension $\< \Gamma _0/Z(\Gamma ) \mid g^q=g,\, g\in G/Z(\Gamma )\> $. The later is constricted by part (b) of Lemma \ref{cut points in HNN}.
\end{proof}

\section{Proof of Theorem \ref{main}}

In this section we assume that zero is a natural number. We start with a technical lemma. Let $\alpha : \Bbb N^2 \rightarrow \Bbb N$ to be a bijective recursive function, say the one defined by $$\alpha(j,n)= ((j+n)^2+j+3n)/2.$$ Set $\alpha_1(m)=j$ whenever $m=\alpha(j,n)$ for some $n\in\mathbb N$.

\begin{lem}\label{lem-existence-good-sequence}  Let $(p_j)_{j \in \Bbb N}$ be a recursively enumerable  sequence of natural  numbers.  Let $F$ be the free group on $\{a,b\}$. Then there exists a recursively  enumerable sequence $(R_n|n \in \Bbb N)$ of cyclically reduced words such that the following properties hold.
\begin{enumerate}
\item[(a)]  The function  $\beta : \Bbb N \rightarrow \Bbb N$ defined by $\beta (n)=\|R_n\|$ is recursive.

\item[(b)] The symmetrized set generated by $\{R_n \}_{ n \in \Bbb N}$ satisfies $C'(1/24)$.

\item[(c)]  For $n \in \Bbb N$, we have
$$
\sum_{k=0}^{n-1} p_{\alpha_1(k)} \beta(k)=o(\sqrt{\beta(n)}).
$$

\item[(d)] Given $j \in \Bbb N$, let $d_n^j=\beta(\alpha(j,n))$. Then $\lim_{n\to \infty} d_n^j=+\infty$.
\end{enumerate}
\end{lem}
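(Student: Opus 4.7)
The plan is to construct the length function $\beta$ and the words $R_n$ simultaneously by recursion on $n$, using an explicit block structure for $R_n$ to secure the small cancellation condition in (b), while forcing $\beta$ to grow fast enough to yield (c).

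Since $(p_j)_{j\in\Bbb N}$ is recursively enumerable, the map $j\mapsto p_j$ is total recursive; combined with the recursiveness of $\alpha_1$, this makes $T_n=\sum_{k<n}p_{\alpha_1(k)}\beta(k)$ computable from $\beta(0),\ldots,\beta(n-1)$. I would then define inductively
\[
S_{n-1}=\ell_0+\cdots+\ell_{n-1},\qquad R_n=a\,b^{S_{n-1}+1}\cdot a\,b^{S_{n-1}+2}\cdots a\,b^{S_{n-1}+\ell_n},
\]
taking $\ell_n$ to be the least integer $\ge 200$ such that $\|R_n\|\ge n^2 T_n^{\,2}+n$ (which exists because $\|R_n\|$ grows at least quadratically in $\ell_n$). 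Setting $\beta(n):=\|R_n\|$ gives (a); the defining inequality forces $T_n\le \sqrt{\beta(n)}/n=o(\sqrt{\beta(n)})$, giving (c); and since $\beta$ is strictly increasing while $\alpha(j,n)\to\infty$ for each fixed $j$, condition (d) is immediate.

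For (b), I would bound an arbitrary piece $u$ of the symmetrized set $R^\ast$ generated by $\{R_n\}$ as follows. By construction the $b$-exponents of $R_n$ lie in the interval $[S_{n-1}+1,\,S_n]$, and these intervals are pairwise disjoint as $n$ varies. Hence if $u$ contained two $a$'s, the maximal $b$-block strictly between them would appear in a unique $R_n$ at a unique position, forcing the two cyclic rotations of elements of $R^\ast$ sharing $u$ to coincide; and pieces relating a rotation of $R_n$ with one of $R_m^{-1}$ are automatically empty, the alphabets $\{a,b\}$ and $\{a^{-1},b^{-1}\}$ being disjoint. Therefore every piece $u$ of an element associated with $R_n$ contains at most one $a$ flanked by partial $b$-blocks, so $|u|\le 2S_n+1$. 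A direct estimate using $\|R_n\|\ge \ell_n(S_{n-1}+\ell_n/2)$ together with $\ell_n\ge 200$ yields $|u|<\|R_n\|/24$, establishing the $C'(1/24)$ condition.

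The main obstacle is coordinating the three quantitative constraints simultaneously: (c) demands that $\beta(n)$ grow at least quadratically in $T_n$, which is itself growing quickly; the piece bound $2S_n+1$ grows only linearly in $\sum_k \ell_k$, and $\beta$ must remain recursive throughout. This works out precisely because $\beta(n)$ is of order $\ell_n S_n$ whereas $S_n$ grows only additively in the $\ell_k$, so the quadratic slack coming from the block product structure of $R_n$ is exactly what is needed to enforce the two conflicting growth requirements at once.
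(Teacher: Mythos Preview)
Your argument is correct. The paper takes a more modular route: it assumes a black-box recursively enumerable sequence $(D_n)$ of cyclically reduced words whose symmetrized closure already satisfies $C'(1/24)$ (a standard fact), and then simply \emph{selects} $R_{n+1}$ to be the first $D_m$ with
\[
\|D_m\|\ \ge\ \Bigl((n+1)\sum_{k\le n}p_{\alpha_1(k)}\beta(k)\Bigr)^{2},
\]
so that (b) is inherited and (c) is forced by the selection rule; (d) then follows from the crude bound $\beta(n)\ge n$ together with $\alpha(j,n)\ge n$. Your construction instead builds the $R_n$ explicitly via the staircase block structure and verifies $C'(1/24)$ by hand using the disjointness of the exponent intervals $[S_{n-1}+1,S_n]$. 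This buys self-containment (no appeal to an external small-cancellation family) at the cost of the extra piece-length computation; the paper's approach is shorter but relies on that external input. One small imprecision: you assert that $\beta$ is strictly increasing to get (d), but this is neither obvious from your recursion nor needed --- your own clause ``$+\,n$'' in the threshold gives $\beta(n)\ge n$, and with $\alpha(j,n)\to\infty$ that already yields (d), exactly as in the paper.
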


\proof
Let $(D_n)_{n \in \Bbb N}$ be any infinite recursively enumerable sequence of cyclically reduced words such that the symmetrized set  that it generates satisfies $C'(1/24)$.  We assume also that $(|D_n|)_{n\in \mathbb N}$ is recursively enumerable.
We set  $R_0=D_0$ and $\beta(0)=\| D_0\| $. We will define $R_n$  by induction on $n$ so that
\begin{equation} \label{Rn1}
\left(\sum_{k=0}^{n-1} p_{\alpha_1(k)} \beta(k)\right)/\sqrt{\beta(n)}\leq \frac{1}{n}.
\end{equation}
Suppose that $R_n$ and $\beta(n)$ are already  defined.  We pick the first element $D_m$  which satisfies
$$
\left((n+1)\sum_{k=1}^{n} p_{\alpha_1(k)} \beta(k)\right)^2\leq \|D_m\|
$$
and set $R_{n+1}=D_m$ and $\beta(n+1)=\| D_m\|$. Obviously (\ref{Rn1}) holds.  It follows that the sequence $(R_n)_{n \in \Bbb N}$  satisfy  properties (a)--(c).

Since $\beta (n) \geq n$, we have
$$
\beta(\alpha(j,n)) \geq \alpha(j,n) \geq n,
$$
for any fixed $j$ and thus $\lim^{\omega}d_n^j=+\infty$ as required. \qed

\begin{lem} \label{lem-G-center}Let $(k_n)_{n \in \Bbb N}$ be a   sequence of natural  numbers, where $k_n \geq 2$, and $F$ the free group on $\{a,b\}$. Let $(R_n)_{n \in \Bbb N}$ be a sequence  of cyclically reduced  words such that the symmetrized set that it generates satisfies $C'(1/24)$.  Let
$$
G=\<a,b | R_n^{k_n}=[R_n,a]=[R_n,b]=1, n \in \Bbb N\>.
$$
Then the following properties hold:
\begin{enumerate}
\item[(a)] $Z(G)$ is the subgroup generated by $\{R_n\} _{n \in \Bbb N}$.

\item[(b)]  Let $U$ be a subword of a word $R_n^{k_n}$ of length at most $k_n|R_n|/2$. Then the length of the element represented by the word $U$ in $G$ is at least $|U|/8$. In particular, $|R_n^\e |_{\{a,b\}}\ge \| R\|/8 $ for every $\e \not\equiv 0 (\mod k_n)$.
\end{enumerate}
\end{lem}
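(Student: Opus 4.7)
Let $C=\<R_n : n\in\Bbb N\>\le G$. The commutator relations $[R_n,a]=[R_n,b]=1$ force $C\subseteq Z(G)$, and the quotient $\bar G = G/C$ has presentation $\<a,b\mid R_n, n\in\Bbb N\>$, a $C'(1/24)$ small cancellation group. The proof exploits this central extension $1\to C\to G\to\bar G\to 1$.

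\textbf{Part (a).} It suffices to show $Z(\bar G)=\{1\}$, since then any $g\in Z(G)$ projects trivially to $\bar G$ and lies in $C$. Suppose $1\ne\bar g\in Z(\bar G)$; pick a freely reduced representative $g\in F$ of minimal length. Then $[g,a]\ne 1$ in $F$ but $[g,a]=1$ in $\bar G$, so a reduced van Kampen diagram over $\{R_n\}$ has boundary label $[g,a]$. Greendlinger's Lemma for $C'(1/24)$ (see \cite{shu-lyn}) yields a subword of the cyclic boundary that coincides with more than $7/8$ of some $R_n^{\pm 1}$. A standard analysis shows this long subword must lie essentially inside one syllable of $[g,a]\equiv g\,a\,g^{-1}\,a^{-1}$, which lets one shorten $g$ modulo an $R_n$-relation, contradicting minimality. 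Hence $Z(\bar G)=\{1\}$.

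\textbf{Part (b).} Suppose some $V$ with $\|V\|<\|U\|/8$ satisfies $U=V$ in $G$. Let $\Delta$ be a reduced van Kampen diagram over $\<a,b\mid R_n^{k_n},[R_n,a],[R_n,b]\>$ with boundary label $UV^{-1}$. Then
\[
|\partial\Delta|=\|U\|+\|V\|<\frac{9\|U\|}{8}\le\frac{9\,k_n\|R_n\|}{16}<k_n\|R_n\|.
\]
The plan is to transform $\Delta$ into a diagram $\Delta'$ over $\<a,b\mid R_n^{k_n}\>$ with the same boundary. Because each $R_m$ is central in $G$, commutator cells of $\Delta$ organise into $R_m$-bands which (using minimality of $\Delta$) can be pushed outwards and absorbed into adjacent power cells; this is the collar surgery of \cite[Section~3]{Ol-Os-S}. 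Since $\{R_n\}$ is $C'(1/24)$, the relators $\{R_n^{k_n}\}$ form a graded small cancellation system in the sense of \cite{Ol}, and the corresponding graded Greendlinger lemma applies to $\Delta'$: combined with $|\partial\Delta'|<k_n\|R_n\|$ and the fact that one boundary arc of $\Delta'$ is the subword $U\subset R_n^{k_n}$, it forces $\Delta'$ to have no cells. Thus $U\equiv V$ in $F$, contradicting $\|V\|<\|U\|$. For the ``in particular'' clause, set $\e'=\min(\e,k_n-\e)\in\{1,\ldots,\lfloor k_n/2\rfloor\}$; then $R_n^\e=R_n^{\pm\e'}$ in $G$, and $R_n^{\pm\e'}$ is a subword of $R_n^{\pm k_n}$ of length $\e'\|R_n\|\le k_n\|R_n\|/2$, so $|R_n^\e|_{\{a,b\}}\ge\e'\|R_n\|/8\ge\|R_n\|/8$.

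\textbf{Main obstacle.} The crux is the collar surgery transforming $\Delta$ into $\Delta'$: verifying that commutator cells can be consistently rearranged around power cells without creating reducible pairs and while preserving the boundary label. A cleaner alternative is to apply graded small-cancellation analysis directly to the mixed presentation, treating $[R_n,a]$ and $[R_n,b]$ as rank-$0$ relators and $R_n^{k_n}$ as rank-$n$ relators, and invoking Olshanskii's graded Greendlinger lemma uniformly.
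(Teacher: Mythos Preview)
The paper's own proof is almost entirely a citation.  For (a) it observes, exactly as you do, that $G/C$ is the $C'(1/24)$ quotient $\langle a,b\mid R_n\rangle$ and then simply asserts that such a group has trivial center; for (b) it says the main inequality is \emph{verbatim} \cite[Lemma~5.10]{Ol-Os-S}, and that the ``in particular'' clause follows easily.  So your overall strategy for (a) coincides with the paper's, and your derivation of the ``in particular'' clause from the main inequality is exactly what the paper intends.

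Where you diverge is in part (b): rather than citing \cite[Lemma~5.10]{Ol-Os-S} you try to reprove it, and the sketch has a genuine gap beyond the surgery step you already flag.  After passing to a diagram $\Delta'$ over $\{R_m^{k_m}\}$, the bound $|\partial\Delta'|<k_n\|R_n\|$ does \emph{not} by itself force $\Delta'$ to be cell-free: a Greendlinger cell could carry the label $R_m^{k_m}$ for some $m$ with $k_m\|R_m\|\ll k_n\|R_n\|$, and your perimeter bound says nothing about such cells.  Excluding $m\neq n$ requires an argument using the $C'(1/24)$ condition between distinct $R_i$ together with the fact that a long arc of the Greendlinger cell lies on $U\subset R_n^{k_n}$; excluding $m=n$ requires the constraint $\|U\|\le k_n\|R_n\|/2$ to prevent the cell from being absorbed along $U$.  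Neither step is carried out, and the preliminary collar surgery producing $\Delta'$ is itself a nontrivial piece of \cite[\S5]{Ol-Os-S}.  This is precisely why the paper outsources (b) to that reference instead of redoing the diagram analysis.
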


\proof
Let $N=\<R_n | n \in \Bbb N\>$. By the given presentation, $N \leq Z(G)$.   We note that  $G/N \cong F/(R)$, where $(R)$ is the normal closure of $(R_n|n \in \Bbb N)$ in $F$. Since  the symmetrized set generated by  $(R_n|n \in \Bbb N)$ satisfies $C'(1/24)$,   $Z(G/N)=1$.  Therefore  $N=Z(G)$. The first part of (b) is exactly the statement of  \cite[Lemma 5.10]{Ol-Os-S}.  The statement `in particular' follows easily. \qed

We are now ready to present the main construction used in the proof of Theorem \ref{main}. Let $\mathcal P= (p_j)_{j \in \Bbb N}$ be a recursively enumerable sequence of prime numbers. Let $(R_n)_{n\in \mathbb N}$ and $\beta :\Bbb N \rightarrow \Bbb N$ be the sequence and the map  given by Lemma \ref{lem-existence-good-sequence}.  Set $$k_n=p_{\alpha_1(n)}$$ and let
$$
G=\<a,b| R_n^{k_n}=[R_n, a]=[R_n,b]=1, n \in \Bbb N\>.
$$

Then $G$ is recursively presented and  the subgroup $N=\<R_0, R_1, \dots \>$ is the center of $G$, by Lemma \ref{lem-G-center}.  Let $A=\{a,b\}$. We embed $G$ into a finitely presented group $\Gamma$ with a generating set $B$  as in Theorem  \ref{thm-embedding}. In particular, we have
\begin{equation}\label{distortion}
\sqrt{|g|_A}\leq |g|_B \leq |g|_A
\end{equation}
for every $g\in G$.

The center $N$ inherits a metric from $\Gamma$ and we consider asymptotic cones of $N$ with respect to that metric.  Let $\omega$ be a non-principal ultrafilter  and $d=(d_n)$  an unbounded non-decreasing  scaling sequence of positive real numbers.

\begin{lem} \label{lem1} Let $(g_n)^\omega \in$ \emph{Con}$^{\omega}(N, d)$ and let  $g_n =R_0^{\epsilon_1} \cdots R_{i_n}^{\epsilon_{i_n}}$, where $0 \leq \epsilon_l\leq k_l-1$ for $0 \leq l \leq i_n$ and $\epsilon_{i_n} \neq 0$.  Then $\sum_{l=0}^{i_n-1}k_l|R_l|_B=o_{\omega}(|R_{i_n}|_B)=o_{\omega}(|R_{i_n}^{\epsilon_{i_n}}|_B)$.
\end{lem}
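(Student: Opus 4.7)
The plan is to reduce the asserted $o_\omega$-estimates to part~(c) of Lemma~\ref{lem-existence-good-sequence} by sandwiching the $B$-length of powers of $R_l$ between the $A$-length (which is controlled by $\beta(l)$) and its square root, using the quadratic distortion bound~(\ref{distortion}).

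First I would bound the numerator from above. Since $R_l$ is a word of length $\beta(l)$ in the alphabet $A=\{a,b\}$, we have $|R_l|_A\le\beta(l)$, and hence $|R_l|_B\le\beta(l)$ by the upper half of~(\ref{distortion}). Recalling $k_l=p_{\alpha_1(l)}$ and invoking Lemma~\ref{lem-existence-good-sequence}(c), this gives
\begin{equation*}
\sum_{l=0}^{i_n-1}k_l|R_l|_B\;\le\;\sum_{l=0}^{i_n-1}p_{\alpha_1(l)}\beta(l)\;=\;o(\sqrt{\beta(i_n)})\quad\text{as }i_n\to\infty.
\end{equation*}
Next I would bound the denominator from below. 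Since $0<\epsilon_{i_n}<k_{i_n}$, the ``in particular'' clause of Lemma~\ref{lem-G-center}(b) gives $|R_{i_n}|_A,\,|R_{i_n}^{\epsilon_{i_n}}|_A\ge\beta(i_n)/8$, and the lower half of~(\ref{distortion}) then produces the matching lower bound $|R_{i_n}|_B,\,|R_{i_n}^{\epsilon_{i_n}}|_B\ge\sqrt{\beta(i_n)/8}$. Dividing, the ratio of the numerator to either denominator is $o(1)$ pointwise as $i_n\to\infty$.

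Finally, I would upgrade this pointwise estimate to an $\omega$-limit. The only thing to check is that $i_n\to\infty$ along $\omega$: if $i_n$ were $\omega$-bounded, then $g_n$ would take only finitely many values on an $\omega$-large set, so $|g_n|_B$ would be $\omega$-bounded, and because $d_n\to\infty$ the class $(g_n)^\omega$ would collapse to the basepoint --- the case excluded by the intended application of the lemma. The substantive step is the numerator estimate, since this is where the quadratic loss built into~(\ref{distortion}) has to be absorbed by the super-square-root growth gap between $\sum_{l<n}p_{\alpha_1(l)}\beta(l)$ and $\beta(n)$ arranged in Lemma~\ref{lem-existence-good-sequence}(c); once this balance is secured, the rest of the argument is automatic.
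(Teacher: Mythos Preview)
Your argument is correct and matches the paper's proof: bound the sum above by $\sum_{l<i_n}k_l\beta(l)$ using $|R_l|_B\le\beta(l)$, bound $|R_{i_n}|_B$ and $|R_{i_n}^{\epsilon_{i_n}}|_B$ below by a constant multiple of $\sqrt{\beta(i_n)}$ via Lemma~\ref{lem-G-center}(b) and~(\ref{distortion}), and then invoke Lemma~\ref{lem-existence-good-sequence}(c). Your final paragraph on why $i_n\to\infty$ along $\omega$ is actually a detail the paper's proof glosses over (it simply applies Lemma~\ref{lem-existence-good-sequence}(c) without comment), so your version is, if anything, slightly more careful on this edge case.
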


 \proof We have
$$
dist_B(g_n, R_{i_n}^{\epsilon_{i_n}}) \leq \sum_{l=0}^{i_n-1}|R_l^{\epsilon_l}|_B \leq \sum_{l=0}^{i_n-1}k_l|R_l|_B\leq \sum_{l=0}^{i_n-1}k_l \beta(l).
$$
By Lemma \ref{lem-G-center}(b) and (\ref{distortion}), we have
$$
\sqrt{\beta(i_n)}/8 \leq |R_{i_n}|_B \leq \beta(i_n).
$$
Therefore,
$$
(\sum_{l=0}^{i_n-1}k_l|R_k|_B)/|R_{i_n}|_B  \leq 8(\sum_{l=0}^{i_n-1}k_l \beta(l))/\sqrt{\beta(i_n)},
$$
and the first equality follows  by Lemma \ref{lem-existence-good-sequence}(c).  Combining (\ref{distortion}) and  Lemma \ref{lem-G-center}(b), we get
\begin{equation}\label{former 2}
|R_{i_n}^{\epsilon_{i_n}}|_B \geq \sqrt{|R_{i_n}^{\epsilon_{i_n}}|_A}\geq \sqrt{\beta(i_n)}/8,
\end{equation}
and the second equality follows also by Lemma \ref{lem-existence-good-sequence}(c).\qed

 \begin{lem} \label{lem2} Let $(g_n)^\omega \in$ \emph{Con}$^{\omega}(N, d)$ as in Lemma \ref{lem1}. Then $|R_{i_n}^{\epsilon_{i_n}}|_B=O_\omega(d_n)$ and  $(g_n)^\omega=(R_{i_n}^{\epsilon_{i_n}})^\omega$.  In particular, if $(g_n)^\omega\neq (1)^\omega$ then $|R_{i_n}^{\epsilon_{i_n}}|_B=\Theta_\omega(d_n)$.
 \end{lem}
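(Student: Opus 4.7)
The plan is to combine the triangle inequality with the estimate from Lemma \ref{lem1} and the observation that membership in \emph{Con}$^{\omega}(N,d)$ forces $|g_n|_B=O_\omega(d_n)$. Specifically, since $(g_n)^\omega$ is a point of the asymptotic cone based at $1$, one has $|g_n|_B = dist_B(g_n,1) = O_\omega(d_n)$. The triangle inequality together with the crude bound $|R_l^{\epsilon_l}|_B \le k_l |R_l|_B$ gives
$$ |R_{i_n}^{\epsilon_{i_n}}|_B \;\le\; |g_n|_B + \sum_{l=0}^{i_n-1} |R_l^{\epsilon_l}|_B \;\le\; |g_n|_B + \sum_{l=0}^{i_n-1} k_l |R_l|_B, $$
and the first conclusion of Lemma \ref{lem1} says the last sum is $o_\omega(|R_{i_n}^{\epsilon_{i_n}}|_B)$. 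Absorbing this into the left-hand side --- on a set of full $\omega$-measure it is at most, say, $\tfrac12 |R_{i_n}^{\epsilon_{i_n}}|_B$ --- yields $|R_{i_n}^{\epsilon_{i_n}}|_B \le 2|g_n|_B$ $\omega$-almost everywhere, hence $|R_{i_n}^{\epsilon_{i_n}}|_B = O_\omega(|g_n|_B) = O_\omega(d_n)$, which is the first assertion.

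For the identification $(g_n)^\omega = (R_{i_n}^{\epsilon_{i_n}})^\omega$ in the cone, I would show that $dist_B(g_n, R_{i_n}^{\epsilon_{i_n}})/d_n \to 0$ along $\omega$. The same chain of inequalities gives
$$ dist_B(g_n, R_{i_n}^{\epsilon_{i_n}}) \;\le\; \sum_{l=0}^{i_n-1} k_l |R_l|_B \;=\; o_\omega\bigl(|R_{i_n}^{\epsilon_{i_n}}|_B\bigr), $$
and composing with the bound $|R_{i_n}^{\epsilon_{i_n}}|_B = O_\omega(d_n)$ already obtained yields $dist_B(g_n, R_{i_n}^{\epsilon_{i_n}}) = o_\omega(d_n)$, as required.

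For the ``in particular'' clause, the hypothesis $(g_n)^\omega \ne (1)^\omega$ together with the identification just established forces $\lim^\omega |R_{i_n}^{\epsilon_{i_n}}|_B/d_n > 0$; combined with the upper bound from the first part, this gives $|R_{i_n}^{\epsilon_{i_n}}|_B = \Theta_\omega(d_n)$. I do not see a substantive obstacle here: Lemma \ref{lem1} has already done the hard work, and the remaining steps are a clean application of the triangle inequality and the arithmetic of the $o_\omega$ and $O_\omega$ symbols. The only delicate point is the absorption step, which requires passing to a set of full $\omega$-measure in order to convert $x \le y + o_\omega(x)$ into $x = O_\omega(y)$.
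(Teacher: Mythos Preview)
Your proposal is correct and follows essentially the same approach as the paper: both use the triangle inequality together with Lemma~\ref{lem1} to bound $|R_{i_n}^{\epsilon_{i_n}}|_B$ by $|g_n|_B$ up to a multiplicative factor tending to $1$, then invoke cone membership for $O_\omega(d_n)$, and finally chain the $o_\omega$ and $O_\omega$ bounds to obtain the identification and the ``in particular'' clause. Your absorption step is spelled out more explicitly than in the paper, but the underlying argument is the same.
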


 \proof We have
$$
\frac{|g_n|_B}{d_n}\geq \frac{|R_{i_n}^{\epsilon_{i_n}}|_B}{d_n}(1-o_\omega(|R_{i_n}^{\epsilon_{i_n}}|_B)),
$$
and thus $\lim^\omega (|R_{i_n}^{\epsilon_n}|_B/d_n)<\infty$.  By Lemma \ref{lem1},  $d(g_n, R_{i_n}^{\epsilon_n})=o_\omega(|R_{i_n}^{\epsilon_{i_n}}|_B)$ and since $\lim^\omega (|R_{i_n}^{\epsilon_n}|_B/d_n)<\infty$, we conclude that $d(g_n, R_{i_n}^{\epsilon_n})=o_\omega(d_n)$. Hence $(g_n)^\omega=(R_{i_n}^{\epsilon_{i_n}})^\omega$. \qed

\begin{lem}  \label{lem3} For every $j \in \Bbb N$, there exists $r^j=(r^j_n)_{n \in \Bbb N}$ such that for any non-principal ultrafilter $\omega$, \emph{Con}$^{\omega}(N, r^j)$ consists of  exactly $p_j$ points.
\end{lem}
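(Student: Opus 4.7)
The plan is to choose $r^j_n = |R_{\alpha(j,n)}|_B$, modifying the first few terms if necessary so that the sequence is non-decreasing. From the distortion bound (\ref{distortion}) together with Lemma \ref{lem-G-center}(b) I would derive $r^j_n \ge \sqrt{\beta(\alpha(j,n))/8}$, so $r^j_n \to \infty$ by Lemma \ref{lem-existence-good-sequence}(d) and $r^j$ is a valid scaling sequence.

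First I would exhibit $p_j$ distinct points of $\mathrm{Con}^\omega(N, r^j)$: the trivial element $(1)^\omega$ together with $(R_{\alpha(j,n)}^\epsilon)^\omega$ for $\epsilon = 1, \dots, p_j - 1$. Each sits in the cone because $|R_{\alpha(j,n)}^\epsilon|_B \le (p_j - 1)\, r^j_n$. Since $R_{\alpha(j,n)}$ has prime order $p_j$, every $\epsilon \not\equiv 0 \pmod{p_j}$ is invertible modulo $p_j$, so $R_{\alpha(j,n)}$ is a power of $R_{\alpha(j,n)}^\epsilon$ of exponent at most $p_j - 1$; this yields $|R_{\alpha(j,n)}^\epsilon|_B \ge r^j_n/(p_j - 1)$. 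Applied to the difference $\epsilon - \epsilon'$ of two exponents, it forces the cone-distance between $(R_{\alpha(j,n)}^\epsilon)^\omega$ and $(R_{\alpha(j,n)}^{\epsilon'})^\omega$ to be at least $1/(p_j - 1) > 0$, so all $p_j$ elements are distinct.

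For the reverse inclusion, I would take an arbitrary non-trivial $(g_n)^\omega \in \mathrm{Con}^\omega(N, r^j)$ and write $g_n = R_0^{\epsilon_0(n)} \cdots R_{i_n}^{\epsilon_{i_n}(n)}$ as in Lemma \ref{lem1}. By Lemma \ref{lem2} one has $(g_n)^\omega = (R_{i_n}^{\epsilon_{i_n}})^\omega$ with $|R_{i_n}^{\epsilon_{i_n}}|_B = \Theta_\omega(r^j_n)$. I would then partition $\mathbb{N}$ according to whether $i_n < \alpha(j,n)$, $i_n = \alpha(j,n)$, or $i_n > \alpha(j,n)$; the ultrafilter selects exactly one of the three pieces. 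If $i_n < \alpha(j,n)$ $\omega$-a.s., then $|R_{i_n}^{\epsilon_{i_n}}|_B \le k_{i_n}\beta(i_n) \le \sum_{l < \alpha(j,n)} k_l \beta(l) = o(\sqrt{\beta(\alpha(j,n))})$ by Lemma \ref{lem-existence-good-sequence}(c), which combined with $r^j_n \ge \sqrt{\beta(\alpha(j,n))/8}$ yields $|R_{i_n}^{\epsilon_{i_n}}|_B/r^j_n \to 0$, contradicting $\Theta_\omega(r^j_n)$. If $i_n > \alpha(j,n)$ $\omega$-a.s., then (\ref{distortion}) and Lemma \ref{lem-G-center}(b) give $|R_{i_n}^{\epsilon_{i_n}}|_B \ge \sqrt{\beta(\alpha(j,n)+1)/8}$, and the inductive construction of $\beta$ in Lemma \ref{lem-existence-good-sequence} (which forces $\beta(n+1) \ge n^2 \beta(n)^2$) makes $|R_{i_n}^{\epsilon_{i_n}}|_B/r^j_n \to \infty$ against $r^j_n \le \beta(\alpha(j,n))$, again a contradiction. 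Hence $i_n = \alpha(j,n)$ $\omega$-a.s.; since $\epsilon_{i_n}(n)$ then takes finitely many values in $\{1, \dots, p_j - 1\}$, the ultrafilter fixes some $\epsilon$ $\omega$-a.s., and $(g_n)^\omega = (R_{\alpha(j,n)}^\epsilon)^\omega$.

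The main obstacle I expect is the choice of $r^j_n$. Since the distortion (\ref{distortion}) is only polynomial, $|R_n|_B$ is pinned down only in the wide interval $[\sqrt{\beta(n)/8},\beta(n)]$, so the naive scaling $r^j_n = \beta(\alpha(j,n))$ may collapse all the candidate non-trivial points to $(1)^\omega$. Taking $r^j_n = |R_{\alpha(j,n)}|_B$ is the right remedy; the delicate part will be verifying that both the $o(\sqrt{\beta(n)})$ bound of Lemma \ref{lem-existence-good-sequence}(c) and the super-quadratic growth of $\beta$ survive the polynomial slack of (\ref{distortion}) tightly enough to squeeze out both the lower- and higher-index cases.
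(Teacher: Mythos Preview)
Your proposal is correct and follows essentially the same route as the paper: the identical choice $r^j_n=|R_{\alpha(j,n)}|_B$, the trichotomy on $i_n$ versus $\alpha(j,n)$ handled via Lemma~\ref{lem-existence-good-sequence}(c) and the super-growth of $\beta$, and the prime-order argument for distinctness of the $p_j$ points. The only cosmetic differences are that you explicitly verify the sequence can be made non-decreasing and that you organize the ``exactly $p_j$'' count as two inclusions, whereas the paper interleaves them.
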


\proof   Let  $r^j=(r^j_n)$, where
$$
r_n^j=|R_{\alpha(j,n)}|_B.
$$

Combining (\ref{distortion}) and  Lemma \ref{lem-G-center}(b), we get
\begin{equation}\label{former 3}
\sqrt{d^j_n}/8\leq r_n^j\leq d^j_n,
\end{equation}
and in particular $\lim^\omega r^j_n=+\infty$ by Lemma \ref{lem-existence-good-sequence}(d).  Let $(g_n)^\omega \in$ Con$^{\omega}(N, d)$, with $(g_n)^\omega\neq (1)^\omega$,  and write   $g_n =R_0^{\epsilon_1} \cdots R_{i_n}^{\epsilon_{i_n}}$, $0 \leq \epsilon_l\leq k_l-1$ for $0 \leq l \leq i_n$ and $\epsilon_{i_n} \neq 0$ as in Lemma \ref{lem1}.

By (\ref{distortion}), (\ref{former 2}), and (\ref{former 3}) we have
$$
\sqrt{ \beta(i_n)}/8 \leq |R_{i_n}^{\epsilon_{i_n}}|_B\leq |R_{i_n}^{\epsilon_{i_n}}|_A,
$$
and
$$
\sqrt{d^j_n}/8\leq r_n^j\leq d^j_n.
$$
Thus
\begin{equation}\label{C1}
\lim{^\omega} (\sqrt{\beta(i_n)}/d^j_n)/8\leq \lim{^\omega} (|R_{i_n}^{\epsilon_{i_n}}|_B/r^j_n)<+\infty ,
\end{equation}

Let us show that
\begin{equation}\label{C2} \omega(\{n | i_n=\alpha(j,n)\})=1.
\end{equation}
Indeed otherwise one of the following two equalities holds
$$
\omega(\{n | i_n<\alpha(j,n)\})=1
$$
$$
\omega(\{n | i_n>\alpha(j,n)\})=1.
$$
Suppose that the first equality holds. Then $\beta(i_n) < d^j_n=\beta (\alpha(j,n))$. By properties of the map $\beta$, we get $\lim^{\omega} k_{i_n} \beta (i_n) /\sqrt{d_n^j}=0$. However, we have
$$
|R_{i_n}^{\epsilon_{i_n}}|_B/r^j_n \leq k_{i_n}|R_{i_n}|_B/\sqrt{d_n^j}\leq k_{i_n} \beta (i_n) /\sqrt{d_n^j},
$$
which shows that $\lim^\omega |R_{i_n}^{\epsilon_{i_n}}|_B/r^j_n=0$, contradicting Lemma \ref{lem2}.
Suppose that the second equality holds. Then $\beta(i_n) > d^j_n=\beta (\alpha(j,n))$ and as above $\lim^\omega (\sqrt{\beta(i_n)}/d^j_n)=+\infty$,  contradicting (\ref{C1}).

Thus (\ref{C2}) holds  and hence there exists $0 \leq s \leq p_j$ such that $(g_n)^\omega=(R_{\alpha(j,n)}^s)^\omega$. Suppose that for $s\neq t$,  $(R_{\alpha(j,n)}^s)^\omega=(R_{\alpha(j,n)}^t)^\omega$.  Then $\lim^\omega |R_{\alpha(j,n)}^{s-t}|_T/r^j_n=0$. Since the subgroup generated by $R_{\alpha(j,n)}$ is cyclic of prime order $p_j$, $R_{\alpha(j,n)}^{s-l}$ is also a generator, and thus
$$
R_{\alpha(j,n)}=(R_{\alpha(j,n)}^{s-l})^{m_n},$$
for some $1 \leq m_n \leq p_j-1$.  Therefore  $$p_j|R_{\alpha(j,n)}^{s-t}|_B \geq m_n|R_{\alpha(j,n)}^{s-t}|_B \geq |R_{\alpha(j,n)}|_B,$$
and thus $\lim^\omega |R_{\alpha(j,n)}|_B/r^j_n=0$, which is a contradiction. This ends the proof of the lemma.   \qed

 \begin{lem} \label{lem4} There exists $d=(d_n)$ such that for any for any non-principal ultrafilter $\omega$, \emph{Con}$^{\omega}(N, d)$ consists of  exactly one  point.
 \end{lem}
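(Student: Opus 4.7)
\emph{Proof plan.} The plan is to place $d_n$ into the ``gap'' between the word-length intervals produced by successive generators $R_k$ of the center. By (\ref{distortion}) and Lemma \ref{lem-G-center}(b), every nontrivial power $R_k^{\epsilon}$ with $0<\epsilon<k_k$ satisfies
$$
\sqrt{\beta(k)/8}\le |R_k^{\epsilon}|_B\le k_k\beta(k),
$$
and applying Lemma \ref{lem-existence-good-sequence}(c) with $n+1$ in place of $n$ gives the key gap estimate $k_n\beta(n)=o(\sqrt{\beta(n+1)})$. Hence the intervals of possible word-lengths $|R_k^{\epsilon}|_B$ for different values of $k$ are pairwise separated by multiplicative gaps of unbounded width, and any scaling sequence placed inside these gaps will be incomparable with $|R_k^{\epsilon}|_B$ for every admissible $(k,\epsilon)$.

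Concretely I would take $d_n$ to be the geometric mean of the endpoints of the $n$-th gap,
$$
d_n=\sqrt{k_n\beta(n)\cdot\sqrt{\beta(n+1)/8}},
$$
so that $k_n\beta(n)=o(d_n)$ and $d_n=o(\sqrt{\beta(n+1)/8})$ both follow from $k_n\beta(n)=o(\sqrt{\beta(n+1)})$. The explicit lower bound $\beta(n+1)\ge(n+1)^2k_n^2\beta(n)^2$ coming from the construction in Lemma \ref{lem-existence-good-sequence} forces $d_{n+1}/d_n\to\infty$ and $d_n\to\infty$, so after at most finitely many adjustments at the beginning, $(d_n)$ is a valid non-decreasing scaling sequence.

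Fix now an arbitrary non-principal ultrafilter $\omega$ and suppose, for contradiction, that some $(g_n)^\omega\in\Con(N,d)$ is nontrivial. Writing $g_n=R_0^{\epsilon_0^n}\cdots R_{i_n}^{\epsilon_{i_n}^n}$ in the normal form of Lemma \ref{lem1} (with $0\le\epsilon_l^n<k_l$ and $\epsilon_{i_n}^n\ne 0$) and applying Lemma \ref{lem2} yields $|R_{i_n}^{\epsilon_{i_n}^n}|_B=\Theta_\omega(d_n)$. Exactly one of the three sets $\{n:i_n<n\}$, $\{n:i_n=n\}$, $\{n:i_n>n\}$ has $\omega$-measure $1$, and in each case the choice of $d_n$ yields a contradiction. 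If $i_n<n$ $\omega$-almost surely, then $|R_{i_n}^{\epsilon_{i_n}^n}|_B\le k_{i_n}\beta(i_n)\le\sum_{l=0}^{n-1}k_l\beta(l)=o(\sqrt{\beta(n)})=o(d_n)$, using Lemma \ref{lem-existence-good-sequence}(c) and $\sqrt{\beta(n)}=o(d_n)$. If $i_n=n$ $\omega$-a.s., then $|R_n^{\epsilon_{i_n}^n}|_B\le k_n\beta(n)=o(d_n)$ by the choice of $d_n$. If $i_n>n$ $\omega$-a.s., then $|R_{i_n}^{\epsilon_{i_n}^n}|_B\ge\sqrt{\beta(i_n)/8}\ge\sqrt{\beta(n+1)/8}$, while $d_n=o(\sqrt{\beta(n+1)/8})$, so $d_n=o(|R_{i_n}^{\epsilon_{i_n}^n}|_B)$. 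Each case contradicts $|R_{i_n}^{\epsilon_{i_n}^n}|_B=\Theta_\omega(d_n)$, so every element of $\Con(N,d)$ equals $(1)^\omega$.

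The only delicate point is the choice of $d_n$ and the verification that it lies strictly between the two relevant growth rates; once the gap estimate $k_n\beta(n)=o(\sqrt{\beta(n+1)})$ is extracted from Lemma \ref{lem-existence-good-sequence}(c), the rest is a mechanical three-case analysis driven by Lemma \ref{lem2}. The argument is uniform in $\omega$ because the deterministic bounds above do not depend on the ultrafilter.
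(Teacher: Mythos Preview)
Your proof is correct and follows essentially the same route as the paper: choose $d_n$ to fall in the multiplicative gap between $k_n\beta(n)$ and $\sqrt{\beta(n+1)}$, invoke Lemma~\ref{lem2} to reduce to $|R_{i_n}^{\epsilon_{i_n}}|_B=\Theta_\omega(d_n)$, and dispatch the three cases $i_n<n$, $i_n=n$, $i_n>n$. The only difference is the specific choice of scaling sequence: the paper takes the simpler $d_n=nk_n\beta(n)$, which sits just above the top of the $k=n$ range (so the case $i_n=n$ is handled by the bare inequality $|R_n^{\epsilon}|_B/d_n\le 1/n$), while you take the geometric mean of the gap endpoints and appeal once more to the growth estimate $k_n\beta(n)=o(\sqrt{\beta(n+1)})$ to handle that case. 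Both choices work; the paper's is slightly more economical, yours makes the ``gap'' heuristic more explicit. One minor remark: your claimed bound $\beta(n+1)\ge (n+1)^2k_n^2\beta(n)^2$ is read off from the \emph{proof} of Lemma~\ref{lem-existence-good-sequence} rather than its statement, but since you only need it to check that $(d_n)$ is eventually increasing, and Lemma~\ref{lem-existence-good-sequence}(c) already forces $d_n\to\infty$, this is harmless.
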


 \proof Let $d=(d_n)$, where
 $$
 d_n=nk_n\beta(n).
 $$

 We claim that Con$^{\omega}(N, d)$ consists of  exactly one  point.    Let $(g_n)^\omega \in$ Con$^{\omega}(N, d)$ and write   $g_n =R_0^{\epsilon_1} \cdots R_{i_n}^{\epsilon_{i_n}}$, $0 \leq \epsilon_l\leq k_l-1$ for $0 \leq l \leq i_n$ and $\epsilon_{i_n} \neq 0$ as in Lemma \ref{lem1}.  One of the following equalities holds:
 $$
\omega(\{n | i_n<n\})=1, \; \; \omega(\{n | i_n>n\})=1, \;\; \omega(\{n | i_n=n\})=1.
$$
We will show that in each case $(g_n)^\omega=(1)^\omega$.

 Suppose that the first equality holds.  Then $\beta(i_n)<\beta (n)$ and
$$
 |R_{i_n}^{\epsilon_n}|_B/d_n \leq k_{i_n}\beta(i_n)/d_n, \; \; \lim{^\omega} (k_{i_n}\beta(i_n)/nk_n\beta(n)) =0,
$$
and thus $(g_n)^\omega=(1)^\omega$ as required.
Further suppose that the second equality holds.   Then $\beta(i_n)>\beta (n)$ and
$$
 |R_{i_n}^{\epsilon_n}|_B/d_n \geq \sqrt{\beta(i_n)}/d_n, \; \; \lim{^\omega} (\sqrt{\beta(i_n)}/nk_n\beta(n)) =\infty,
 $$
and thus $(g_n) \not \in$ Con$^\omega(N, d)$; a contradiction. Finally if the third equality holds, then
 $$
 |R_{i_n}^{\epsilon_n}|_B/d_n\leq k_n\beta(n)/nk_n\beta(n)=1/n,
 $$
 and thus $(g_n)^\omega=(1)^\omega$ as required. \qed

Lemmas \ref{lem3} and \ref{lem4} will be combined with the following result.

\begin{lem}[Olshanskii-Osin-Sapir {\cite[Theorem 5.8]{Ol-Os-S}}] \label{thm-OOS} Let $N$ be a central subgroup of a finitely generated group $G$ endowed with the metric induced by the word metric on $G$. Suppose that \emph{Con}$^{\omega}(N,d)$ consists of $m<\infty$ points for some non-principal ultrafilter $\omega$ and some scaling sequence $d=(d_n)$. Then $c(\CG)=mc(\Con(G/N, d))$.
\end{lem}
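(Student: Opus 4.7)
My plan is to exploit the natural $1$-Lipschitz projection $\pi\colon\Con(G,d)\to\Con(G/N,d)$ induced by the quotient $G\to G/N$ and treat it as an $m$-sheeted cover. Because $N$ is central, the formula $(g_n)^\omega\cdot(z_n)^\omega=(g_nz_n)^\omega$ defines an isometric action of $\Con(N,d)$ on $\Con(G,d)$ whose orbits are exactly the fibers of $\pi$; since $\Con(N,d)$ consists of $m$ elements, each fiber has cardinality $m$. Let $\delta>0$ be the minimal distance between distinct points of $\Con(N,d)$; this transfers, via the action, to a uniform separation $\delta$ between distinct points of any fiber of $\pi$ in $\Con(G,d)$. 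Combined with $1$-Lipschitz-ness of $\pi$, this lets one verify that the restriction of $\pi$ to any open ball of radius $<\delta/3$ is an isometry onto its image, so $\pi$ is a genuine $m$-fold covering with evenly covered neighborhoods, and in particular rectifiable paths in the base lift uniquely from a prescribed starting preimage.

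For the upper bound $c(\Con(G,d))\le m\cdot c(\Con(G/N,d))$, take a disconnecting set $D\subset\Con(G/N,d)$ of minimal size $k=c(\Con(G/N,d))$. Then $\pi^{-1}(D)$ has exactly $mk$ points, and any path in $\Con(G,d)\setminus\pi^{-1}(D)$ projects to a path in $\Con(G/N,d)\setminus D$; so preimages of points lying in distinct components of $\Con(G/N,d)\setminus D$ cannot be joined in the total space, and $\pi^{-1}(D)$ disconnects $\Con(G,d)$.

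For the lower bound, suppose $F\subset\Con(G,d)$ satisfies $|F|<mk$ and set $D_{=m}=\{x\in\Con(G/N,d):\pi^{-1}(x)\subseteq F\}$. Since each fiber has exactly $m$ points, $m|D_{=m}|\le|F|<mk$ forces $|D_{=m}|<k$, so $\Con(G/N,d)\setminus D_{=m}$ is path-connected. Given $p,q\in\Con(G,d)\setminus F$, one has $\pi(p),\pi(q)\notin D_{=m}$, and I would construct a joining path in $\Con(G,d)\setminus F$ in three moves: pick a path $\gamma$ from $\pi(p)$ to $\pi(q)$ in $\Con(G/N,d)\setminus D_{=m}$; perturb $\gamma$ locally around each of the finitely many $x\in\pi(F)\setminus D_{=m}$ so that, on the sheet currently being followed, the lift dodges the at most $m-1$ bad preimages over $x$; lift the resulting path at $p$ to some endpoint $q'\in\pi^{-1}(\pi(q))\setminus F$; and correct $q'$ to $q$ by concatenating with a short loop at $\pi(q)$ whose monodromy carries the appropriate sheet, which exists because $\Con(G,d)$ is path-connected and $\Con(N,d)$ acts transitively on fibers.

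The main obstacle is the middle perturbation step together with the final correction: asymptotic cones need not be locally simply connected, so standard covering-space lifting arguments must be supplemented by explicit geodesic surgery. Concretely, bypassing each bad fiber requires choosing a small detour in $\Con(G/N,d)$ whose lift into the chosen safe sheet remains within an evenly covered $\delta/3$-ball, and the corrective loop at $\pi(q)$ must be threaded so that its own lift from $q'$ stays outside $F$. Both reductions are local and involve only finitely many fibers, so the argument terminates and yields the desired equality.
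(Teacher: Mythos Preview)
The paper does not prove this lemma; it is quoted verbatim from \cite[Theorem~5.8]{Ol-Os-S} and used as a black box in the proof of Theorem~\ref{main}, so there is no in-paper argument to compare your proposal against.

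Judged on its own, your covering-map setup is sound: the action of $\Con(N,d)$ is free and isometric, fibers of $\pi$ have exactly $m$ points, and $\pi$ is an $m$-sheeted covering (with the minor correction that radius $<\delta/4$, not $\delta/3$, is what guarantees the restriction to a ball is an isometry onto its image). Your upper bound $c(\CG)\le m\,c(\Con(G/N,d))$ is correct as written.

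The lower bound, however, has a genuine gap. Once you lift $\gamma$ starting at $p$, the lift is \emph{uniquely determined}; if it meets some $f\in F$ over $x=\pi(f)$ you must either (i) detour $\gamma$ around $x$ in the base, or (ii) insert a loop with nontrivial monodromy. For (i), any detour confined to an evenly covered ball stays on the same sheet, so it helps only if it avoids $x$ altogether---but a small ball in a geodesic metric space can be disconnected by a single point (this happens in every tree-graded space at a branch point), so a local detour avoiding $x$ need not exist, and a global one may pass through other points of $\pi(F)$ or change sheets in an uncontrolled way. The same problem recurs for (ii) and for your final ``correction'' loop: such loops certainly exist somewhere because $\CG$ is connected, but you give no reason their lifts can be chosen to miss $F$, and that is exactly the statement you are trying to establish. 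The closing sentence ``both reductions are local and involve only finitely many fibers, so the argument terminates'' does not close this circle: the \emph{obstructions} sit over finitely many fibers, but the \emph{surgeries} needed to bypass them are global and can interfere with one another. As written, the lower-bound argument is not a proof; you would need an independent mechanism---for instance, exploiting homogeneity of $\CG$ under the ultrapower action, as in \cite{Ol-Os-S}---to produce sheet-changing paths that provably avoid $F$.
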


We are  now ready to prove the main result of this section.

\begin{proof}[Proof of Theorem \ref{main}] Let $(p_j)_{j \in \Bbb N}$ be a recursively enumerable sequence of primes such that $p_i \neq p_j$ for $i \neq j$, and let $r^j$ be the sequence given by Lemma \ref{lem3}. Since $N=Z(G)$  we have $N=Z(\Gamma)$ by Theorem  \ref{thm-embedding} (b). Hence,  by Lemma \ref{thm-OOS}, $c($Con$^\omega(\Gamma,r^j))=p_jc($Con$^\omega(\Gamma/Z(\Gamma),r^j))=p_j$ as $\Gamma/Z(\Gamma ) $ is constricted by Theorem \ref{thm-embedding} (d). On the other hand, if $d$ is the sequence given by Lemma \ref{lem4}, then we similarly obtain $c($Con$^\omega(\Gamma,d))=1$. By choosing the sequence  $(p_j)_{j \in \Bbb N}$ to be the one of all the primes, we obtain Theorem \ref{main}.
\end{proof}

\begin{rem} We note that for different sequences $p=(p_j)_{j \in \Bbb N}$, $q=(q_j)_{j \in \Bbb N}$, the corresponding groups $\Gamma_p$ and $\Gamma_q$ satisfy $Z(\Gamma_p) \not \cong Z(\Gamma_q)$ and thus $\Gamma_p$ and $\Gamma_q$ are not isomorphic. Hence there exist countably many finitely presented groups satisfying conclusions of both corollaries of Theorem \ref{main}.
\end{rem}

\end{document}